\DeclareMathOperator{\Rp}{Re}
\DeclareMathOperator{\Ip}{Im}
\newtheorem{theorem}{Theorem}[section]
\newtheorem{corollary}[theorem]{Corollary}
\newtheorem{lemma}[theorem]{Lemma}
\newcommand{\wt}[1]{\widetilde{#1}}
\begin{document}

\title[Norm of the Backward Shift]{
Bounds on the Norm of the Backward Shift and Related 
Operators in 
Hardy and Bergman Spaces}

\author{Timothy Ferguson}

\begin{abstract}
We study bounds for the backward shift operator $f \mapsto (f(z)-f(0))/z$ and the related operator $f \mapsto f - f(0)$ on Hardy and Bergman spaces of analytic and harmonic functions.  If $u$ is a real valued harmonic function, we also find a sharp bound on $M_1(r,u-u(0))$ in terms of $\|u\|_{h^1}$, where $M_1$ is the integral mean with $p=1$.
\end{abstract}

\maketitle

\section{Introduction}

\newcommand{\szop}{\mathcal{B}}
\newcommand{\bsop}{B}

For a space of continuous functions in the unit 
disc with bounded point evaluation, we can 
consider the operator $\szop$ defined by 
$\szop(f) = f - f(0)$. For a space of analytic 
functions in the unit disc, it also makes sense 
to consider the backward shift operator 
$\bsop$ defined by $\bsop(f) = [f-f(0)]/z$.  
On $H^2$, the backward shift operator is the adjoint of the 
forward shift operator given by $Sf(z) = z f(z)$.  
Both $S$ and $\bsop$ have been extensively studied in the 
literature.  

In this article, we study the norms of these operators on various spaces. 
For Hardy spaces, both $\bsop$ and $\szop$ have the same 
norm, and since $|f(0)| \leq \|f\|$, it is 
clear that the norm is at most $2$.  However, 
we are not aware of anywhere in the literature that 
discusses this question further, beyond the 
observation that the norms of both $\bsop$ and 
$\szop$ are exactly $1$ for $H^2$.  One can use the above 
facts and interpolation to
show that 
$\|\bsop\|_{H^p} \leq 2^{(2-p)/p}$ if $1 \leq p \leq 2$
and $\|\bsop\|_{H^p} \leq 2^{(p-2)/p}$ for 
$2 \leq p \leq \infty$.  However, this does not 
settle the question of whether the norm on 
$H^1$ and $H^\infty$ is less than $2$. 
We 
prove that $\|\bsop\|_{H^\infty} = 2$ but that 
$\|\bsop\|_{H^1} < 2$.  In fact, we prove that 
$\|\bsop\|_{H^1} \leq 1.71$.  

We also study bounds for $\bsop$ and $\szop$ on other spaces. 
Let $A^p$ denote the Bergman space of the unit disc with normalized 
area measure and let $a^p_{\mathbb{R}}$ be the real harmonic Bergman space on 
the unit disc with normalized area measure.  
We show that 
$\|\szop\|_{A^p} \leq \|\szop\|_{H^p}$. 
This also implies that 
$\|\bsop\|_{A^p} \leq 2 \|\szop\|_{H^p}$.  
We also show that $\|\szop\|_{a^1_{\mathbb{R}}} \leq 1.84$.  

Lastly, we consider the operator $\szop_r$ 
from the real valued harmonic Hardy space $h^1_{\mathbb{R}}$ onto 
$L^1(\partial \mathbb{D})$ defined by
\[ 
u \mapsto u(re^{i\theta}) - u(0)
\]
This is the operator that maps 
a harmonic function $u$ in $h^1_{\mathbb{R}}$ to the 
restriction of $u - u(0)$ to the circle of radius $r$. 
We show that this operator has norm 
$2 - \tfrac{4}{\pi}\arccos(r)$.  Furthermore, the maximum in the 
definition of norm is attained, for example by the Poisson kernel.

It turns out that the question of bounds for 
$\bsop$ and $\szop$ on subspaces of $L^1$ are related to 
questions about concentrations of 
functions.  
Consider for example the space $h^1_{\mathbb{R}}$.  Let $f_n(z)$ be the 
function in $h^1_{\mathbb{R}}$ with boundary values given by 
\begin{equation}\label{eq:cutoff-def}
f_n(e^{i \theta}) = \begin{cases} 
   \pi n &\text{ if $-1/n \leq \theta \leq 1/n$,}\\
     0 &\text{otherwise.}
\end{cases}
\end{equation}
  Then $f_n(0) = 1$ and 
$\|f_n(z) - f_n(0)\|_{h^1_{\mathbb{R}}} = 2 - (4/n)$.  
This shows that the norm of $\szop$ on $h^1_{\mathbb{R}}$ is $2$.  
Notice that the boundary functions 
of the $f_n$ have most of their $L^1$ norm concentrated on sets of 
small measure.  Moreover, their sign does not oscillate on these sets.  
Contrast these functions with the functions 
$g_n$ with boundary values given by 
\[
g_n(e^{i \theta}) = \begin{cases} 
   -\pi n &\text{ if $-1/n \leq \theta < 0$}\\
   \pi n &\text{ if $0 \leq \theta \leq 1/n$}\\
     0 &\text{otherwise}
\end{cases}.
\]
The $g_n$ also have boundary values concentrated on sets of small measure, 
but the sign of their boundary values oscillates, which allows 
$g_n(0)$ to be $0$.  Thus $\|g(z)-g(0)\|_{h^1_{\mathbb{R}}} = 1$. 
Roughly speaking, for $\szop f$ to have large norm, 
$f$ should have most of its mass concentrated on a set of small measure, 
and its sign should not oscillate much.  

For a related example, we define the 
Poisson kernel by 
\[
P_{r}(e^{i t}) = \frac{1-r^2}{1-2r\cos(t) + r^2}.
\]
If $z = re^{it}$, then  $P_{r}(t)$ is a harmonic function of 
$z$, which we may denote here by $P(z)$.  
It is not difficult to see that 
$P(0) = 0$ and 
$\|P - 1\|_{h^1_{\mathbb{R}}} = 2$
This is related to the 
fact that the Poisson kernel is a Poisson integral of a point mass,
so that $P$ is as concentrated as possible in some sense. 
This example also hints at that the fact that 
$\|\szop\|_{H^1} < 2$, because of the fact that the analytic 
completion of the Poisson kernel is not in $H^1$.  Thus, we might
expect that there is some limit to the concentration of boundary 
values of $H^1$ functions.  Related to this is the fact that 
it is not difficult to find 
an analytic function in $H^1$ that is large only on a set of small 
measure - for example $c_n (1+z)^n/2^n$, where 
$c_n$ is chosen so that the function has norm $1$.  However, such 
functions oscillate in sign near the points where they are large. 

In order to formalize the above observations, 
we prove two different theorems about 
concentration of of analytic functions 
on sets 
of small measure.  
Suppose $\|f\|_{H^1} = 1$.  
In the first theorem, we prove 
that if $A \subset \mathbb{T}$ and 
$\int_A \Rp f(0) \, d\theta/(2\pi) > 1 - \epsilon$ for 
small enough $\epsilon$, then $m(A)$ cannot 
be too small.  
The second says that if 
$\|f\|_{L^1(A)} > 1 - \epsilon$ for some small enough set 
$A$ and for small enough $\epsilon$, then $f(0)$ cannot be too large. 
We provide two proofs that 
$\|\szop\|_{H^1} < 2$, where each proof uses one of the above theorems.  

\section{Bounds for Hardy spaces}
Let $0 < p < \infty$.
For a continuous function $f$ in the unit 
disc $\mathbb{D}$, we define the $p^{\textrm{th}}$ integral mean of $f$ at radius $r$ by 
\[
M_p(r,f) = \left( \frac{1}{2\pi} \int_0^{2\pi} |f(re^{i\theta})|^p \right)^{1/p}.
\]
We define $M_\infty(r,f) = \sup_{0 \leq \theta < 2\pi} |f(re^{i\theta})|$. 

We define $H^p$ to be the space of 
analytic functions in the unit disc 
such that $\|f\|_{H^p} = \sup_{0 \leq r < 1} M_p(r,f) < \infty$. 
We define $h^p_{\mathbb{R}}$ to be the space of real valued harmonic 
functions in the unit disc such that 
$\|f\|_{H^p} = \sup_{0 \leq r < 1} M_p(r,f) < \infty$.  Note that $M_p(r,f)$ is increasing 
for $0 < p \leq \infty$ if $f$ is analytic 
and for $1 \leq p \leq \infty$ if $f$ is 
harmonic (see \cite[Theorems 1.5 and 1.6]{D_Hp}). 
Functions in $H^1$ have radial limits almost everywhere on the boundary 
of the unit disc, and they are uniquely determined by their boundary 
value functions.  In fact, the norm of an $H^1$ function is equal to the 
$L^1$ norm of its boundary function. In contrast to this, $h^1_{\mathbb{R}}$ functions, 
even though they have radial limits almost everywhere, are not uniquely 
determined by their boundary values.  However, they can be written as 
convolutions of Poisson kernels with finite Radon measures 
(see \cite{D_Hp}). 

We define $A^p$ to be the subspace of 
$L^p$ of the unit disc (with normalized 
area measure) consisting of analytic 
functions.  Let $a^p_{\mathbb{R}}$ be the (real) 
subspace of $L^p$ consisting of real 
valued harmonic functions.

In this section we discuss bounds for the operators 
$\bsop$ and $\szop$ on $H^1$ and also on 
$H^\infty$. 
We begin with two theorems.  
The first roughly says that an analytic function in 
$H^1$ that has most of the mass of its boundary 
value function concentrated on a small set must 
show an appreciable degree of cancellation if the 
function is integrated over the set. The second 
theorem is similar, but deals instead with the 
integral of the function over all of 
the unit circle. 

\begin{theorem}\label{thm:concentrated-rp-set-bound} Suppose that $\|f\|_{H^1} = 1$ and that for some set 
$A \subset \mathbb{T}$, we have $\int_A \Rp f \, dt/2\pi \geq 1-\epsilon$ for 
$\epsilon < 1/4$.  
Then 
\[
m(A) \ge \max_{0 < \gamma < 1} 
   \frac{\log(\gamma + \epsilon) - \log(1 - 2\epsilon)}{\log \gamma}
\] 
\end{theorem}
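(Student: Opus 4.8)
The plan is to first use the mean value property to convert the hypothesis into a lower bound for $|f(0)|$, then invoke the subharmonicity of $\log|f|$ for $H^1$ functions to bound $\frac{1}{2\pi}\int_0^{2\pi}\log|f(e^{it})|\,dt$ from below, and finally bound the same integral from above using the concentration of $|f|$ on $A$ together with two applications of the concavity of $\log$ — once to reduce to a ``mass on $A$ versus mass on $E$'' estimate, once to reassemble it against a free parameter $\gamma$, which is then optimized.

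First I would record the elementary consequences of the hypothesis. Write $dm = dt/2\pi$ and $E = \mathbb{T}\setminus A$. From $\Rp f\le|f|$ we get $\int_A|f|\,dm\ge\int_A\Rp f\,dm\ge 1-\epsilon$, and since $\|f\|_{H^1}=1$ this forces $\int_E|f|\,dm\le\epsilon$. Using $\Rp f\ge-|f|$ on $E$ and the mean value identity $\Rp f(0)=\int_{\mathbb{T}}\Rp f\,dm$,
\[
\Rp f(0)=\int_A\Rp f\,dm+\int_E\Rp f\,dm\ge(1-\epsilon)-\epsilon=1-2\epsilon,
\]
which is positive since $\epsilon<1/4$; in particular $|f(0)|\ge 1-2\epsilon$. (This is where the ``$1-2\epsilon$'' of the statement originates.) Note also that $m(A)>0$, since otherwise the first inequality of the hypothesis fails, and that we may assume $m(A)<1$, since the displayed maximum is easily checked to be at most $1$.

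Next, since $f\not\equiv 0$, the function $\log|f|$ is subharmonic on $\D$ with integrable boundary values, so the sub-mean-value (Jensen) inequality gives $\log(1-2\epsilon)\le\log|f(0)|\le\int_{\mathbb{T}}\log|f|\,dm$. It remains to bound this integral from above. Splitting over $A$ and $E$ and applying Jensen's inequality (concavity of $\log$) on each piece, together with $\int_A|f|\,dm\le 1$, $\int_E|f|\,dm\le\epsilon$, and monotonicity of $\log$, yields
\[
\int_{\mathbb{T}}\log|f|\,dm\le m(A)\log\frac{1}{m(A)}+m(E)\log\frac{\epsilon}{m(E)}.
\]
Now fix $\gamma\in(0,1)$ and add $m(A)\log\gamma$ to both sides; the right side becomes $m(A)\log\frac{\gamma}{m(A)}+m(E)\log\frac{\epsilon}{m(E)}$, and a second use of concavity of $\log$, with weights $m(A),m(E)$ (which sum to $1$) applied to $\gamma/m(A)$ and $\epsilon/m(E)$, bounds this by $\log\bigl(m(A)\tfrac{\gamma}{m(A)}+m(E)\tfrac{\epsilon}{m(E)}\bigr)=\log(\gamma+\epsilon)$. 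Combining with the lower bound above gives $\log(1-2\epsilon)+m(A)\log\gamma\le\log(\gamma+\epsilon)$, and since $\log\gamma<0$, dividing through reverses the inequality to
\[
m(A)\ge\frac{\log(\gamma+\epsilon)-\log(1-2\epsilon)}{\log\gamma}.
\]
As $\gamma\in(0,1)$ is arbitrary, taking the maximum over $\gamma$ completes the proof.

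The argument is short once the structure is seen, so the main obstacles are really matters of care rather than depth: getting the two applications of concavity of $\log$ in the right order, using the crude total-mass bounds $\int_A|f|\,dm\le 1$ and $\int_E|f|\,dm\le\epsilon$ at the right moment, and tracking the sign of $\log\gamma$ when dividing. The one non-elementary ingredient is the inequality $\log|f(0)|\le\int_{\mathbb{T}}\log|f|\,dm$ for $f\in H^1$, which is where analyticity (not merely membership in $L^1$) is essential; the boundary cases $m(A)\in\{0,1\}$ are handled at the outset as indicated.
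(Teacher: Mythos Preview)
Your proof is correct and reaches exactly the same key inequality $\gamma^{m(A)}(1-2\epsilon)\le\gamma+\epsilon$ as the paper, but by a genuinely different mechanism. The paper introduces an auxiliary outer function $F$ with $|F|=\gamma$ on $A$ and $|F|=1$ on $A^c$, so that $F(0)=\gamma^{m(A)}$; it then bounds $|F(0)f(0)|$ from below by $\gamma^{m(A)}(1-2\epsilon)$ and from above by $\bigl|\int Ff\bigr|\le\gamma\int_A|f|+\int_{A^c}|f|\le\gamma+\epsilon$. You instead invoke the Jensen inequality $\log|f(0)|\le\int\log|f|\,dm$ for $H^1$ functions directly, and then manufacture the free parameter $\gamma$ after the fact via two applications of the concavity of $\log$. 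In effect, your argument is what one obtains by rewriting the paper's estimate $\bigl|\int Ff\bigr|\le\int|Ff|$ entirely in terms of $\log|f|$ and the weight $\gamma^{\chi_A}$, so that the multiplier $F$ never has to be named. Your route is closer in spirit to the paper's proof of the companion result (Theorem~\ref{thm:concentrated-abs-f0-bound}), which also splits the circle and applies Jensen on each piece, and it has the mild advantage of avoiding any auxiliary analytic construction; the paper's route, on the other hand, makes the role of analyticity slightly more visible, since it uses only the mean value property of the analytic product $Ff$ rather than the subharmonicity of $\log|f|$.
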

\begin{proof}
Let 
\[ 
F(z) = \exp \left\{ \frac{1}{2\pi} 
   \int_0^{2\pi} \frac{e^{it} + z}{e^{it}-z} \log \psi(t) \, dt \right\}
\]
where 
\[
\psi(t) = \begin{cases} \gamma \text{ on $A$} \\
                        1 \text{ on $A^c$}
\end{cases}
\]
and $0 < \gamma < 1$.  Then $F(0)$ is real and 
$\log(F(0))  = \log(\gamma) \cdot m(A)$, and thus 
$F(0) = \gamma^{m(A)}$. 
Also, $|F(e^{i\theta})| = 1$ a.e.\ if $e^{i\theta} \in A^c$, and 
$|F(e^{i\theta})| = \gamma$ a.e.\ if $e^{i\theta} \in A$ a.e. 
Now note that 
\[
\left| \int_0^{2\pi} f(e^{it}) \, \frac{dt}{2\pi} \right| 
\geq \left| \int_A \Rp f(e^{it}) \, \frac{dt}{2\pi} \right| - 
\int_{A^c} |f(e^{it})| \frac{dt}{2\pi}
\geq (1 - \epsilon)-\epsilon.
\]
Thus
$|f(0)| \ge 1 - 2\epsilon$
and
\[
|F(0) \cdot f(0)| \ge \gamma^{m(A)} (1 - 2\epsilon).
\]

But also
\[
|F(0) \cdot f(0)| = 
\left|\int F\cdot f (e^{i\theta}) \frac{dt}{2\pi} \right| 
\le \gamma \int_{A} |f| \frac{dt}{2\pi} + 
\int_{A^c} |f| \frac{dt}{2\pi}
   \le \gamma + \epsilon.
\]

Thus 
\[
\gamma^{m(A)} (1 - 2\epsilon)
\le 
\gamma  + \epsilon.
\]

But this means 
\[ 
m(A) \ge \frac{\log(\gamma + \epsilon) - \log(1 - 2\epsilon)}{\log \gamma}.
\]
To see the maximum of this quantity for $0 < \gamma < 1$ 
is attained, notice that the expression on the right 
of the above inequality approaches $0$ as $\gamma \rightarrow 0^+$ and 
approaches $-\infty$ as $\gamma \rightarrow 1^-$, but is positive for 
$\gamma = \epsilon$. 
\end{proof}

In the next theorem we let $m$ denote normalized arc length measure.  
\begin{theorem}\label{thm:concentrated-abs-f0-bound}
Suppose that $f \in H^p$ and that $\|f\|_{H^p}=1$.  Furthermore, 
suppose that $\|f\|_{L^1(E)} \ge 1 - \epsilon$ for some set 
$E \subset \mathbb{T}$ where $m(E) \le \delta$ 
 and 
$0 < \epsilon, \delta < 1/2$.  Then 
\[
|f(0)| \le 
\left(\frac{1-\epsilon}{\delta}\right)^{\delta}
\left( \frac{\epsilon}{1-\delta} \right)^{1-\delta}.
\]
\end{theorem}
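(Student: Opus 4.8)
The plan is to bound $\log|f(0)|$ from above using the sub-mean-value property of $\log|f|$ on the unit circle, to split the circle into $E$ and $E^c$ and apply Jensen's inequality (concavity of the logarithm) on each piece, and then to finish with an elementary two-stage optimization.

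We may assume $f\not\equiv 0$, since otherwise the bound is trivial. For a nonzero $f\in H^p$ one has $\log|f|\in L^1(\mathbb{T})$ together with the inequality $\log|f(0)|\le\frac{1}{2\pi}\int_0^{2\pi}\log|f(e^{i\theta})|\,d\theta$ (see \cite{D_Hp}). Writing $dm=d\theta/2\pi$ for normalized arc length, I would split this integral as $\int_E\log|f|\,dm+\int_{E^c}\log|f|\,dm$. Since $m(E)\le\delta<\tfrac12$ we have $m(E^c)\ge\tfrac12>0$, and since $\|f\|_{L^1(E)}\ge 1-\epsilon>0$ we also have $m(E)>0$; put $a=m(E)\in(0,\delta]$. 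Applying Jensen's inequality to the probability measures $\tfrac1a\mathbf 1_E\,dm$ and $\tfrac1{1-a}\mathbf 1_{E^c}\,dm$ gives
\[
\log|f(0)| \le a\log\!\left(\frac{\|f\|_{L^1(E)}}{a}\right) + (1-a)\log\!\left(\frac{\|f\|_{L^1(E^c)}}{1-a}\right).
\]

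Next I would invoke the norm inequality $\|f\|_{L^1(\mathbb{T})}\le\|f\|_{H^p}=1$ (valid for $p\ge 1$ via $M_1(r,f)\le M_p(r,f)$) to record, with $s:=\|f\|_{L^1(E)}$ and $u:=\|f\|_{L^1(E^c)}$, the constraints $s\ge 1-\epsilon$, $u\ge 0$, and $s+u\le 1$ (so in particular $u\le\epsilon$). The right-hand side above is increasing in both $s$ and $u$, so over this region its supremum is attained on the face $s+u=1$; and a one-line derivative check shows that $a\log(s/a)+(1-a)\log((1-s)/(1-a))$ is decreasing in $s$ on $[1-\epsilon,1]$ (because $a\le\delta<\tfrac12\le s$ there), so the maximum sits at $s=1-\epsilon$, $u=\epsilon$. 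It then remains to maximize $g(a)=a\log\frac{1-\epsilon}{a}+(1-a)\log\frac{\epsilon}{1-a}$ over $a\in(0,\delta]$; since $g'(a)=\log\frac{(1-\epsilon)(1-a)}{a\epsilon}>0$ exactly when $a<1-\epsilon$, which holds throughout $(0,\delta]$, the maximum is $g(\delta)=\delta\log\frac{1-\epsilon}{\delta}+(1-\delta)\log\frac{\epsilon}{1-\delta}$. Exponentiating gives $|f(0)|\le\left(\frac{1-\epsilon}{\delta}\right)^{\delta}\left(\frac{\epsilon}{1-\delta}\right)^{1-\delta}$, as claimed.

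I expect the main obstacle to be keeping this optimization honest: the two Jensen terms are each increasing in the relevant $L^1$-mass, but these masses are coupled through $\|f\|_{L^1(\mathbb{T})}\le 1$, so the extremal configuration is the corner $(s,u,a)=(1-\epsilon,\epsilon,\delta)$ rather than anything obtained by maximizing the masses independently; getting the order of the two optimization steps right is where a careless argument would go wrong. A secondary point of care is the appeal to $\|f\|_{L^1(\mathbb{T})}\le\|f\|_{H^p}$, which rests on $M_1\le M_p$ and hence on $p\ge 1$ (the case $p<1$, if it is to be included, would need the separate observation that $|f(0)|\le\|f\|_{H^p}$ already forces $|f|$ to be nearly constant when $|f(0)|$ is close to its bound). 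The integrability of $\log|f|$ and the logarithmic sub-mean-value inequality for $H^p$ functions are classical, and I would simply cite them.
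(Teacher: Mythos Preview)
Your proof is correct and follows essentially the same route as the paper: split the circle into $E$ and $E^c$, apply Jensen's inequality on each piece to bound $\int\log|f|$, and then push the parameters to the corner $(\epsilon,\delta)$ by monotonicity. The only cosmetic difference is that the paper first reduces to outer functions to get the exact formula $\log|f(0)|=\int\log|f|\,dm$, whereas you invoke the subharmonic mean-value inequality $\log|f(0)|\le\int\log|f|\,dm$ directly; your optimization step is also laid out a bit more carefully than the paper's monotonicity argument, but the content is the same.
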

\begin{proof}
Let $m(E) = \delta_0$ and $\|f\|_{L^1(E)} = 1 - \epsilon_0$. 
Writing $f$ as the product of an outer function and an inner function 
(see \cite{D_Hp})  
shows that it suffices to assume that $f$ is outer.  Then we may 
assume that 
\[
f(z) = \exp\left\{ \frac{1}{2\pi}\int_0^{2\pi} 
              \frac{e^{it}+z}{e^{it}-z} \log \psi(t) \, dt
\right\}
\]
for some nonnegative function $\psi \in L^p$ such that 
$\log \psi \in L^1$.  Also $|f(e^{it})| = \psi(t)$ a.e. Note that 
$f(0) = \exp ( \frac{1}{2\pi} \int_0^{2\pi} \log \psi(t) \, dt ).$
Now, Jensen's inequality shows that 
\[
\begin{split}
\exp \left( \int_{E} \log \psi(t) \, \frac{dt}{2\pi} \right) 
&= 
\exp \left( \int_{E} m(E) \log \psi(t) \, \frac{dt}{2\pi m(E)} \right) 
\\
&= 
 \left\{
   \exp \left( \int_{E}  \log \psi(t) \, \frac{dt}{2\pi m(E)} \right) 
 \right\}^{m(E)}
\\ &\le 
\left\{\int_{E} \psi(t) \, \frac{dt}{2\pi m(E)}\right\}^{m(E)} \\
& = \left(\frac{1-\epsilon_0}{\delta_0}\right)^{\delta_0}.
\end{split}
\]
A similar calculation for the set $E^c$ shows that 
\[
\exp \left( \int_{E^c} \log \psi(t) \frac{dt}{2\pi} \right) 
\le 
\left( \frac{\epsilon_0}{1-\delta_0} \right)^{1-\delta_0}.
\]
Putting this together gives
\[
f(0) = \exp \left( \int_{E \cup E^c} \log \psi(t) \, \frac{dt}{2\pi} \right) \le 
\left(\frac{1-\epsilon_0}{\delta_0}\right)^{\delta_0}
\left( \frac{\epsilon_0}{1-\delta_0} \right)^{1-\delta_0}.
\]

Since the function $(1 - \epsilon_0)^x \epsilon_0^{1-x}$ is increasing, 
it follows that 
$(1 - \epsilon_0)^{\delta_0} \epsilon_0^{1-\delta_0} 
\leq (1 - \epsilon_0)^{\delta} \epsilon_0^{1-\delta}.$ 
Since the function $x(1-x)$ is increasing for $0 < x < 1/2$, 
we have 
\[
(1 - \epsilon_0)^{\delta} \epsilon_0^{1-\delta} = 
(1 - \epsilon_0)^{\delta} \epsilon_0^{\delta} \epsilon_0^{1-2\delta}
\leq
(1 - \epsilon)^{\delta} \epsilon^{\delta} \epsilon^{1-2\delta} = 
(1 - \epsilon)^{\delta} \epsilon^{1-\delta}.
\]
Since the function $x^x (1-x)^{1-x}$ is decreasing for 
$0 \le x < 1/2$, we have 
\[
\delta_0^{-\delta_0} (1-\delta_0)^{\delta_0-1} \le 
\delta^{-\delta} (1-\delta)^{\delta-1}.
\]
if 
$0 < \delta_0 \le \delta < 1/2$.  Putting this together gives 
the result. 
\end{proof}

We now use Theorem 
\ref{thm:concentrated-rp-set-bound} to 
bound $\|\bsop\|_{H^1}$. 
\begin{theorem} 
The norm of $\szop$ on the Hardy space 
$H^1$ is at most $1.952396$.
\end{theorem}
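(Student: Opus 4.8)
The plan is to normalize, reduce to the case where $f(0)$ is a nonnegative real number close to $1$, apply Theorem~\ref{thm:concentrated-rp-set-bound} to a super-level set of $\Rp f$ to force that set to carry appreciable measure, and then exploit the fact that subtracting $f(0)$ strictly shrinks $|f|$ on that set. This is meant to be the first of the two promised proofs that $\|\szop\|_{H^1}<2$, namely the one built on Theorem~\ref{thm:concentrated-rp-set-bound}. Since $\|\szop(e^{i\beta}f)\|_{H^1}=\|\szop f\|_{H^1}$ for real $\beta$ and $\szop$ is homogeneous, I may assume $\|f\|_{H^1}=1$ and $a:=f(0)\ge 0$; and since the boundary function of $f-f(0)$ has $L^1$ norm equal to $\|\szop f\|_{H^1}$,
\[
\|\szop f\|_{H^1}=\frac{1}{2\pi}\int_0^{2\pi}\bigl|f(e^{it})-a\bigr|\,dt\ \le\ \frac{1}{2\pi}\int_0^{2\pi}|f(e^{it})|\,dt+a=1+a .
\]
Writing $\eta=1-a$, this trivial bound already settles the case $\eta\ge\tfrac14$ (then $1+a\le\tfrac74<1.952396$), so only $\eta<\tfrac14$ needs attention.

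Next I would invoke Theorem~\ref{thm:concentrated-rp-set-bound}. Fix a parameter $\alpha$ with $0<\alpha<a/2$ and $\eta+\alpha<\tfrac14$, and set $A=\{e^{it}\in\mathbb{T}:\Rp f(e^{it})\ge\alpha\}$. The mean value property gives $\frac{1}{2\pi}\int_0^{2\pi}\Rp f(e^{it})\,dt=\Rp f(0)=a$, while on $\mathbb{T}\setminus A$ one has $\Rp f<\alpha$ and $m(\mathbb{T}\setminus A)\le 1$; hence $\frac{1}{2\pi}\int_A\Rp f\,dt\ge a-\alpha=1-(\eta+\alpha)$. Applying Theorem~\ref{thm:concentrated-rp-set-bound} with $\epsilon=\eta+\alpha<\tfrac14$ then yields
\[
m(A)\ \ge\ \Phi(\eta+\alpha),\qquad \Phi(\epsilon):=\max_{0<\gamma<1}\frac{\log(\gamma+\epsilon)-\log(1-2\epsilon)}{\log\gamma}.
\]

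For the pointwise part, write $s=|f(e^{it})|$, so $|f-a|^2=s^2+a^2-2a\Rp f$. On $A$ we have $\Rp f\ge\alpha$ and hence $s\ge\alpha$, so $|f-a|\le\sqrt{s^2+a^2-2a\alpha}=(s+a)-g(s)$, where $g(s)=(s+a)-\sqrt{s^2+a^2-2a\alpha}$; since $\alpha<a/2$ keeps $a^2-2a\alpha>0$, the function $g$ is increasing on $[0,\infty)$, so $g(s)\ge g(\alpha)=2\alpha$ on $A$. Thus $|f-a|\le s+a-2\alpha$ on $A$ and $|f-a|\le s+a$ elsewhere, and integration gives
\[
\|\szop f\|_{H^1}\ \le\ \frac{1}{2\pi}\int_0^{2\pi}\bigl(|f(e^{it})|+a\bigr)\,dt-2\alpha\,m(A)\ \le\ 1+a-2\alpha\,\Phi(\eta+\alpha).
\]
Optimizing the choice of $\alpha$ for each admissible $a$ and then maximizing the resulting expression over $a\in(\tfrac34,1]$ produces a bound of this form, with the asserted constant $1.952396$ coming out of the optimization.

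I expect this last step to be essentially the whole content of the proof: locating the worst-case $a$ together with the optimal $\alpha$ requires reasonably sharp estimates for the one-variable function $\Phi$ (the inner maximum over $\gamma$ in Theorem~\ref{thm:concentrated-rp-set-bound}), and that is the only part which is not routine. A minor point to watch is the compatibility of the constraints $0<\alpha<a/2$ and $\eta+\alpha<\tfrac14$ with a choice of $\alpha$ making $2\alpha\,\Phi(\eta+\alpha)$ usefully large; but since $\Phi(\epsilon)\to 1$ as $\epsilon\to 0^+$, this causes no difficulty precisely in the range $a$ close to $1$, which is where the improvement over the trivial bound $1+a$ is needed.
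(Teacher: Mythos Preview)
Your proposal is correct and follows essentially the same route as the paper: pass to a super-level set $A$ of $\Rp f$, invoke Theorem~\ref{thm:concentrated-rp-set-bound} to bound $m(A)$ from below, and exploit the resulting savings in $|f-f(0)|$ on $A$. The only differences are cosmetic: the paper argues by contradiction (assume $\|f-f(0)\|>2-\alpha$, derive $m(A)\le\alpha/\beta$, and contradict the lower bound from Theorem~\ref{thm:concentrated-rp-set-bound}) and uses the cruder triangle-inequality estimate $|f-f(0)|\le|f|+1-\beta$ on $A$, whereas your identity $|f-a|^2=|f|^2+a^2-2a\,\Rp f$ yields the sharper bound $|f-a|\le|f|+a-2\alpha$ there. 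As for the final optimization you leave open, the paper does not carry it out either; it simply exhibits the values $\alpha=0.047604$, $\beta=0.127079$, $\gamma=0.104634$ and checks numerically that they force a contradiction, and your sharper inequality would reproduce (indeed improve) the stated constant with the same kind of verification.
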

\begin{proof}
Let $m$ denote Lebesgue measure divided by $2\pi$ on the unit circle. 
Suppose that $\|f\|_{H^1} = 1$. 
Without loss of generality, we may assume that $f(0) > 0$.  
Suppose that
$\|f - f(0)\|_{H^1} > 2 - \alpha$ for some $0 < \alpha < 1/2$.  We will 
show that this leads to a contradiction for small enough $\alpha$.  
Note that $f(0) > 1 - \alpha$ and $|f(0)| \le 1$. 

Now consider $u = \Rp f$ and $v = \Ip f$.  
Let $0 < \beta < 1/2$.  Define 
$A = \{e^{i\theta} : u > \beta\}$ and 
$B = \{e^{i\theta} : u \le \beta\}$.  
Now if $u(e^{i\theta}) \ge u(0)$ we have 
$|u(e^{i\theta}) - u(0)| \le |u(e^{i\theta})|$ 
so $|f(e^{i\theta}) - f(0)| \le |f(e^{i\theta})|$ since 
$f(0) = u(0)$ is real. 

However if 
$ \beta < u(e^{i\theta}) < u(0)$ we have 
$|f(e^{i\theta}) - f(0)| = |u(0) - u(e^{i\theta}) + iv(e^{i\theta})|  
\le 1 - \beta + |f(e^{i\theta})|$. 
So if $e^{i\theta} \in A$ we have 
$|f(e^{i\theta}) - f(0)| \le |f(e^{i\theta})| + 1 - \beta$. 
Thus
\[
\int_A |f - f(0)| \, dm \le \int_A |f| \, dm + (1 - \beta) m(A). 
\]

And if $e^{i\theta} \in B$, we have 
$|f(e^{i\theta}) - f(0)| \le |f(e^{i\theta})| + 1$.
Thus 
\[
\int_B |f - f(0)| \, dm \le \int_B |f| \, dm + m(B) .
\]
Therefore,
\[
\begin{split}
2-\alpha < \int |f - f(0)| \, dm &\le \int |f| \, dm + (1 - \beta)m(A) 
+ m(B) 
\\
 &=  1  + (1 - \beta)m(A) + m(B) 
\end{split}
\] 

And therefore 
$
(1 - \beta) m(A) + m(B) > 1 - \alpha$.  But $m(A) + m(B) = 1$ so 
$ -\beta m(A) \ge - \alpha$ so 
\[
m(A) \le \frac{\alpha}{\beta}.
\]

But it is also clear that 
$\int_A u \, dm + \beta > \int u \, dm > 1 - \alpha$.
By Theorem \ref{thm:concentrated-rp-set-bound} we have 
\[
\alpha / \beta \ge 
 \max_{0 < \gamma < 1} 
   \frac{\log(\gamma + (\alpha + \beta)) - \log(1 - 2(\alpha + \beta))}
    {\log \gamma}
\]
as long as $\alpha + \beta < 1/4$. 
However, this is false for 
$\alpha = .047604$ and $\beta = .127079$, as can be seen 
by taking $\gamma = .104634$. 

\end{proof}

Similarly to the above theorem, we now use 
Theorem 
\ref{thm:concentrated-abs-f0-bound} to bound 
$\|\bsop\|_{H^1}$. 
\begin{theorem} The norm of the backward shift operator on 
$H^1$ is at most $1.7047$. 
\end{theorem}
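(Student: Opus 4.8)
The first reduction: since $|z|=1$ on $\mathbb{T}$, for $f\in H^1$ we have $\|\bsop f\|_{H^1}=\|(f-f(0))/z\|_{H^1}=\|f-f(0)\|_{H^1}=\|\szop f\|_{H^1}$, so it suffices to show $\|\szop f\|_{H^1}\le 1.7047$ whenever $\|f\|_{H^1}=1$. Multiplying $f$ by a unimodular constant we may assume $f(0)\ge 0$; if $f(0)=0$ then $\szop f=f$ and $\|\szop f\|_{H^1}=1$, so set $s:=f(0)>0$. I would argue by contradiction: suppose $\|\szop f\|_{H^1}>2-\alpha$ with $\alpha:=0.2953$. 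The main device is the nonnegative function on $\mathbb{T}$
\[
Q=\bigl(|f|+s-|f-f(0)|\bigr)+\bigl(|f|-\Rp f\bigr),
\]
nonnegative because of the triangle inequality (for the first bracket, $s>0$) and $|\,\cdot\,|\ge\Rp(\cdot)$. Using $\int_{\mathbb{T}}|f|\,dm=1$, $\int_{\mathbb{T}}\Rp f\,dm=\Rp f(0)=s$, and $\int_{\mathbb{T}}|f-f(0)|\,dm=\|\szop f\|_{H^1}$, one gets the identity $\int_{\mathbb{T}}Q\,dm=2-\|\szop f\|_{H^1}<\alpha$, and since $|f|-\Rp f\le Q$ pointwise, $s=1-\int_{\mathbb{T}}(|f|-\Rp f)\,dm\ge 1-\int_{\mathbb{T}}Q\,dm>1-\alpha$.

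Next I would bound $Q$ below pointwise by a function of $|f|$ alone. Writing $|f|=w$ and $\Rp f=wx$ with $x\in[-1,1]$, we have $|f-f(0)|=\sqrt{w^2+s^2-2wsx}$, so $Q=2w+s-wx-\sqrt{w^2+s^2-2wsx}$; an elementary minimization over $x$ gives $Q\ge h(w)$, where $h(w)=2w-w^2/(2s)$ for $0\le w\le 2s$ and $h(w)=2s$ for $w\ge 2s$. In particular $Q\ge|f|$ wherever $|f|\le 2s$ and $Q\ge 2s$ wherever $|f|>2s$. Put $E=\{\,|f|>2s\,\}$. Then $2s\,m(E)\le\int_E Q\le\int_{\mathbb{T}}Q<\alpha$, so $m(E)<\alpha/(2s)<\alpha/(2(1-\alpha))$; and $\int_{E^c}|f|\,dm\le\int_{E^c}Q\,dm<\alpha$, so $\|f\|_{L^1(E)}=1-\int_{E^c}|f|\,dm>1-\alpha$ (in particular $E$ has positive measure).

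Now apply Theorem~\ref{thm:concentrated-abs-f0-bound} to $f$ with this set $E$: with $\epsilon:=1-\|f\|_{L^1(E)}<\alpha$ and $\delta:=m(E)<\alpha/(2(1-\alpha))$, both in $(0,\tfrac12)$ since $\alpha<\tfrac12$, it yields $s=f(0)\le\bigl(\tfrac{1-\epsilon}{\delta}\bigr)^{\delta}\bigl(\tfrac{\epsilon}{1-\delta}\bigr)^{1-\delta}$. By the same monotonicity facts used in the proof of Theorem~\ref{thm:concentrated-abs-f0-bound} (namely that this bound is increasing in each of $\epsilon$ and $\delta$ on $(0,\tfrac12)$), the right-hand side is at most the explicit number obtained by replacing $\epsilon$ by $\alpha$ and $\delta$ by $\alpha/(2(1-\alpha))$; a direct evaluation of that number at $\alpha=0.2953$ gives less than $0.6$, contradicting $s>1-\alpha=0.7047$. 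Hence no such $f$ exists and $\|\bsop\|_{H^1}=\|\szop\|_{H^1}\le 2-\alpha=1.7047$.

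The one step needing genuine care is the final comparison—choosing the threshold defining $E$ and the value of $\alpha$ so that the resulting $(\epsilon,\delta)$ beat the bound of Theorem~\ref{thm:concentrated-abs-f0-bound}; everything else is elementary estimation and the elementary minimization giving $h$. I expect that retaining the sharper combined inequality $2s\,m(E)+\int_{E^c}|f|\,dm<\alpha$, and optimizing the threshold in place of $2s$, would allow a somewhat larger value of $\alpha$, but $\alpha=0.2953$ already yields the stated bound.
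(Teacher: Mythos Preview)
Your proof is correct and follows the same overall contradiction strategy as the paper—assume $\|f-f(0)\|>2-\alpha$, locate a set of small measure carrying most of the $L^1$ mass of $f$, and invoke Theorem~\ref{thm:concentrated-abs-f0-bound} to contradict $|f(0)|>1-\alpha$—but the way you reach the concentration estimate is genuinely different. The paper works with the set $A=\{\Rp f>\beta\}$ for an auxiliary parameter $\beta$, and uses the casewise triangle-inequality bounds on $|f-f(0)|$ from the preceding theorem to obtain $m(A)\le\alpha/\beta$ and $\|f\|_{L^1(A)}>1-(\alpha+\beta)$; the final step then requires optimizing over $\beta$ as well as $\alpha$. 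Your device of introducing the nonnegative function $Q=(|f|+s-|f-s|)+(|f|-\Rp f)$ with $\int Q<\alpha$, together with the exact pointwise minimum $Q\ge h(|f|)$, yields tighter constants with no free parameter: for the threshold set $E=\{|f|>2s\}$ you get $m(E)<\alpha/(2(1-\alpha))$ and $\|f\|_{L^1(E)}>1-\alpha$ directly. At $\alpha=0.2953$ your $(\epsilon,\delta)$ lie well inside $(0,\tfrac12)^2$, so Theorem~\ref{thm:concentrated-abs-f0-bound} applies verbatim and the resulting bound $\approx 0.59$ contradicts $s>0.7047$ with room to spare. As you note, this slack indicates your argument would in fact deliver a strictly smaller norm bound than $1.7047$ upon optimizing $\alpha$ (and/or the cutoff $2s$), whereas the paper's two-parameter optimization lands essentially at this value.
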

\begin{proof}
Suppose that $\|f-f(0)\|_{H^1} > 2 - \alpha$ for $0 < \alpha < 1/2$.  
Then $|f(0)| > 1 - \alpha$ 
and we may assume without loss of generality that $f(0)$ is positive. 
Now consider $u = \Rp f$ and $v = \Ip f$.  
Let $0 < \beta < 1/2$.  Define 
$A = \{e^{i\theta} : u > \beta\}$ and 
$B = \{e^{i\theta} : u \le \beta\}$.  
By the reasoning in the proof of the previous theorem, 
$m(A) \le \alpha / \beta$ and 
$\int_A |f| \, dm \geq \int_A u \, dm > 1 - \alpha - \beta$. 
So by Theorem \ref{thm:concentrated-abs-f0-bound}, 
\[
1 - \alpha < |f(0)| \le 
 \left( \frac{1-(\alpha + \beta)}{\alpha/\beta} \right)^{\alpha/\beta}
\left( \frac{\alpha + \beta}{1 - \alpha/\beta} \right)^{1 - (\alpha/\beta)}. 
\]
However, the above inequality is false if 
$\alpha = .295302$ and $\beta = .476286$. 
\end{proof}

In contrast to the case with 
$H^1$, we show that the norm of the backward 
shift operator on $H^2$ is exactly $2$. 
\begin{theorem}\label{thm:hinfinity-bound}
The norm of the backwards shift operator on $H^\infty$ is 
$2$.
\end{theorem}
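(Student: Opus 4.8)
The plan is to establish the matching bounds $\|\bsop\|_{H^\infty}\le 2$ and $\|\bsop\|_{H^\infty}\ge 2$. Recall from the introduction that $\bsop$ and $\szop$ have the same norm on Hardy spaces: if $f\in H^\infty$ then $\szop f=f-f(0)\in H^\infty$, and $\bsop f=(\szop f)/z$ is again in $H^\infty$ because $\szop f$ vanishes at the origin, with boundary modulus $|(\szop f)^*(e^{i\theta})/e^{i\theta}|=|(\szop f)^*(e^{i\theta})|$; hence $\|\bsop f\|_{H^\infty}=\|\szop f\|_{H^\infty}$. We may therefore work with $\szop f=f-f(0)$. The upper bound is then immediate: if $\|f\|_{H^\infty}=1$ then $|f(0)|\le 1$, so $\|\szop f\|_{H^\infty}=\|f-f(0)\|_{H^\infty}\le \|f\|_{H^\infty}+|f(0)|\le 2$.

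For the lower bound I would produce a one-parameter family of unit-norm functions whose images under $\szop$ have norm tending to $2$. A natural choice is the single Blaschke factor $f_a(z)=(z-a)/(1-az)$ for $0<a<1$. This function is inner, so $|f_a(e^{i\theta})|=1$ for every $\theta$ and $\|f_a\|_{H^\infty}=1$. A direct computation gives $f_a(0)=-a$ and $f_a(1)=1$, so on the unit circle
\[
|\szop f_a(e^{i\theta})|=|f_a(e^{i\theta})+a|\le |f_a(e^{i\theta})|+a=1+a,
\]
with equality at $\theta=0$, where $f_a=1$ and $a>0$. Since $f_a$ extends holomorphically to a neighborhood of $\overline{\mathbb{D}}$, the maximum modulus principle gives $\|\szop f_a\|_{H^\infty}=\max_{\mathbb{T}}|\szop f_a|=1+a$, and letting $a\to 1^-$ shows $\|\szop\|_{H^\infty}\ge 2$. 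Combining the two bounds yields $\|\szop\|_{H^\infty}=\|\bsop\|_{H^\infty}=2$.

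There is no serious obstacle in this argument; it is entirely elementary once the right test functions are identified, and the only points requiring a little care are the reduction $\|\bsop f\|_{H^\infty}=\|f-f(0)\|_{H^\infty}$ via $|z|=1$ on $\mathbb{T}$, and the observation that the supremum defining the operator norm is not attained. Indeed, a maximizer $f$ with $\|f\|_{H^\infty}=1$ would force $|f(0)|=1$, hence $f$ constant by the maximum modulus principle, hence $\szop f=0$, a contradiction. This is exactly the feature that distinguishes the $H^\infty$ case from the harmonic Hardy space operator $\szop_r$ treated later in the paper, where the extremal value is attained.
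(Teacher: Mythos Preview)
Your argument is correct and noticeably cleaner than the paper's. Both proofs dispose of the upper bound by the triangle inequality; the difference is in the construction of near-extremal functions for the lower bound. The paper builds, for each $\epsilon>0$, a Riemann map $f$ from $\mathbb{D}$ onto a region $R\subset\mathbb{D}$ consisting of a small disc of radius $\epsilon$ centered at $1-\epsilon$ together with a thin triangle reaching out to $-1$, normalized so that $f(0)=1-\epsilon$ and $f(-1)=-1$; this gives $|f(-1)-f(0)|=2-\epsilon$ and $\|f\|_{H^\infty}\le 1$. Your choice of the Blaschke factor $f_a(z)=(z-a)/(1-az)$ achieves the same end with a single explicit formula: it is an automorphism of $\mathbb{D}$, hence $\|f_a\|_{H^\infty}=1$, and $f_a(1)-f_a(0)=1+a\to 2$. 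The paper's approach has the mild advantage that its map is a genuine $H^\infty$ function on the closed disc with $\|f\|_\infty\le 1$ and with the extremal value taken at an interior-limit point, but this is not needed for the theorem; your test functions are the standard ones, require no appeal to conformal mapping, and your additional remark that no maximizer exists (since $|f(0)|=1$ would force $f$ constant) is a nice complement absent from the paper.
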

\begin{proof}
It is clear that 
$\|\bsop\|_{H^\infty} \leq 2$.  
Now consider the region 
\[
R = \{z: |z-(1-\epsilon)|<\epsilon\} \cup 
     \Delta(1-\epsilon - \epsilon e^{-i\delta}, 
                   1-\epsilon - \epsilon e^{i\delta}, -1)
\]
where $\Delta(a,b,c)$ denotes the triangle with vertices $a$, $b$, and 
$c$ (see Figure \ref{fig:hinfty1} on page \pageref{fig:hinfty1}). 
Let $f$ be the conformal map from $\mathbb{D}$ to $R$ that sends 
the origin to $1-\epsilon$ and that if extended to the boundary sends 
 $1$ to $1$. 

Now $f$ attains the value $-1$ on the boundary of the disc.  
In fact
$f(-1) = -1$ since the uniqueness of the conformal map
shows that $\overline{f(\overline{z})} = f(z)$.  So 
\[
|f(-1) - f(0)| = |-1 - (-1 - \epsilon)| = 2 - \epsilon. 
\]
So 
$\|f-f(0)\|_\infty \geq 2 - \epsilon$.  Since this holds for any 
$\epsilon > 0$, the theorem is proven. 

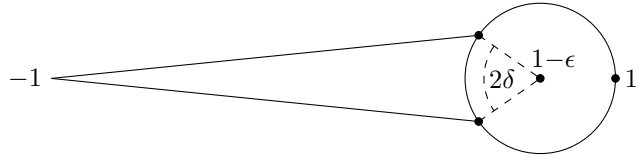
\begin{figure}
\begin{tikzpicture}[scale=5.0]

\coordinate (cent) at (.8,0);
\coordinate (A) at ($(cent) + (145:.2cm)$);
\coordinate (B) at ($(cent) + (215:.2cm)$);
\coordinate (left) at (-.5,0);
\draw[fill=black] (cent) circle (.01cm) node[anchor=south]{$\ \ \ 1{-}\epsilon$};
\draw (left) node[anchor=east]{$-1$} -- (A) ; 
\draw (left) -- (B);
\draw (cent) circle (.2cm);
\draw[fill=black] (1,0) circle(.01cm)node[anchor=west]{$1$};
\draw[fill=black] (A) circle(.01cm);
\draw[fill=black] (B) circle(.01cm);
\draw[dashed] (A) -- (cent);
\draw[dashed] (B) -- (cent);
\draw ($(cent)+(-.09,0)$) node{$2\delta\ $};
\draw[dashed] ($(cent)+(145:.15cm)$) arc (145:215:.15cm);

\end{tikzpicture}
\caption{The region R from the proof of Theorem \ref{thm:hinfinity-bound}.}
\label{fig:hinfty1}
\end{figure}
\end{proof}

\section{Bergman Spaces}

We first prove a theorem relating the norm of 
$\szop$ on Bergman spaces to its norm on 
Hardy spaces.

\begin{theorem} \label{thm:norm-szop-Ap}
Suppose that the norm of the operator 
$\szop$ is equal to $K$ on $H^p$.  Let 
$\mu$ be a radial weight such that $\mu(\mathbb{D}) < \infty$. 
Then the norm of 
$\szop$ is at most $K$ on the 
Bergman space $A^p(\mu)$. 
\end{theorem}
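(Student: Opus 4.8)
The plan is to exploit the rotational invariance of $\mu$ to reduce the Bergman estimate to the $H^p$ estimate one radius at a time. Since $\mu$ is radial and finite, it disintegrates in polar coordinates: there is a finite positive Borel measure $\wt{\mu}$ on $[0,1)$, with total mass $\wt{\mu}([0,1)) = \mu(\mathbb{D})$, such that
\[
\int_{\mathbb{D}} g \, d\mu
= \int_0^1 \left( \frac{1}{2\pi}\int_0^{2\pi} g(re^{i\theta}) \, d\theta \right) d\wt{\mu}(r)
\]
for every $\mu$-integrable $g$; in particular, for analytic $f$ this gives $\|f\|_{A^p(\mu)}^p = \int_0^1 M_p(r,f)^p \, d\wt{\mu}(r)$. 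Because $|f(0)| \le M_p(r,f)$ for all $r$ (mean value property together with monotonicity of the means) and $\wt{\mu}([0,1)) = \mu(\mathbb{D}) < \infty$, point evaluation at the origin is bounded on $A^p(\mu)$, so $\szop f = f - f(0)$ is well defined there.

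Next I would dilate. Fix $r \in [0,1)$ and set $f_r(z) = f(rz)$. Then $f_r$ is analytic on the disc of radius $1/r > 1$, hence continuous on $\overline{\mathbb{D}}$, so $f_r \in H^p$ and its $H^p$ norm is the $L^p$ norm of its boundary function $\theta \mapsto f(re^{i\theta})$; that is,
\[
\|f_r\|_{H^p}^p = \frac{1}{2\pi}\int_0^{2\pi} |f(re^{i\theta})|^p \, d\theta = M_p(r,f)^p.
\]
Likewise $f_r(0) = f(0)$, so $f_r - f_r(0)$ is exactly the dilation by $r$ of the analytic function $z \mapsto f(z) - f(0)$, whence $\|f_r - f_r(0)\|_{H^p} = M_p(r, f - f(0))$. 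Applying the hypothesis $\|\szop\|_{H^p} \le K$ to $f_r$ then yields
\[
M_p(r, f - f(0)) = \|\szop f_r\|_{H^p} \le K\,\|f_r\|_{H^p} = K\,M_p(r,f),
\]
valid for every $r \in [0,1)$.

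Finally I would raise this pointwise inequality to the $p$-th power and integrate it against $\wt{\mu}$:
\[
\|\szop f\|_{A^p(\mu)}^p
= \int_0^1 M_p(r, f - f(0))^p \, d\wt{\mu}(r)
\le K^p \int_0^1 M_p(r,f)^p \, d\wt{\mu}(r)
= K^p \|f\|_{A^p(\mu)}^p.
\]
Taking $p$-th roots and then the supremum over $f$ with $\|f\|_{A^p(\mu)} = 1$ gives $\|\szop\|_{A^p(\mu)} \le K$.

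I do not expect a genuine obstacle. The only two points needing any care are the polar disintegration of a radial finite measure, which is standard, and the identity $\|f_r\|_{H^p} = M_p(r,f)$ for the dilate, which holds because $f_r$ is continuous on $\overline{\mathbb{D}}$ and analytic inside, so its $H^p$ norm is literally the $L^p$ norm of its boundary values (no limiting argument in $r$ is needed). Everything else is a one-line application of the hypothesis together with integration of a pointwise inequality, so the argument in fact goes through verbatim for every $0 < p < \infty$.
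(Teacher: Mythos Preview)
Your proposal is correct and follows essentially the same approach as the paper: apply the $H^p$ bound to the dilation $f_r$ to obtain $M_p(r,f-f(0)) \le K\,M_p(r,f)$ for each $r$, then integrate this inequality against the radial part of $\mu$. Your write-up is somewhat more careful about the polar disintegration and about why $\|f_r\|_{H^p}=M_p(r,f)$, but the argument is the same.
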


\begin{proof}
Let $\mu \, dA = 
 \widetilde{\mu}(r) 2r \, dr \, d\theta$.  
Note that 
\(
M_p(r,f-f(0)) \le K M_p(r,f)
\)
by the Hardy space bound applied to the dilation $f_r(z)$.  
Thus
\[
\begin{split}
\|f-f(0)\|_{A^p(\mu)}^p &= \int_0^1 M_p^p(r,f-f(0)) 2r \, d\widetilde{\mu}(r) 
\le K^p \int_0^1 M_p^p(r,f) 2r \, d\widetilde{\mu}(r) 
\\
&= K^p \|f\|_{A^p(\mu)}^p.
\end{split}
\]
\end{proof}

We now prove a similar theorem to the one 
above, but for $\bsop$. This theorem is 
slightly more difficult to prove since 
dividing by $z$ can increase the norm of a 
Bergman space function. 
\begin{theorem} Suppose that the norm of the operator 
$\bsop$ is equal to $K$ on $H^1$.  Let 
$\mu$ be a finite radial weight that is increasing.  Then the norm of 
$\bsop$ is at most $2K$ on the Bergman space $A^1(\mu)$. 
\end{theorem}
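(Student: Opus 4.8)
The plan is to reduce the Bergman bound for $\bsop$ to the Hardy bound for $\szop$ (which is essentially the same as the Hardy bound for $\bsop$, the two norms agreeing on $H^1$). The key observation is the identity $\bsop f(z) = [\szop f(z)]/z$, so $|\bsop f(re^{i\theta})| = |\szop f(re^{i\theta})|/r$, and therefore $M_1(r, \bsop f) = M_1(r, \szop f)/r \le K\, M_1(r,f)/r$ by the Hardy bound applied to the dilation $f_r$, as in the proof of Theorem~\ref{thm:norm-szop-Ap}. Writing $\mu\, dA = \wt\mu(r)\, 2r\, dr\, d\theta$, this gives
\[
\|\bsop f\|_{A^1(\mu)} = \int_0^1 M_1(r, \bsop f)\, 2r\, d\wt\mu(r) \le K \int_0^1 M_1(r,f)\, 2\, d\wt\mu(r).
\]
The right-hand side is now $K$ times an integral with $2\,d\wt\mu(r)$ in place of $2r\, d\wt\mu(r)$, so the whole difficulty is to absorb the missing factor of $r$ at the cost of only a factor of $2$, using that $\wt\mu$ is increasing.

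First I would handle the worst case, a point mass of $\mu$ at the boundary, i.e.\ $\wt\mu$ a Heaviside-type jump at $r=1$; there the extra factor is $1/r \to 1$ and we actually lose nothing, so a factor of $2$ is comfortably enough. The general increasing case should interpolate between the uniform case (where $\int_0^1 M_1(r,f)\, 2\, dr \le 2\int_0^1 M_1(r,f)\, 2r\, dr$ fails pointwise but... ) — here I need to be careful. The honest route is: since $M_1(r,f)$ is increasing in $r$ (Duren, cited above for $1\le p\le\infty$ harmonic, and certainly for analytic), and $\wt\mu$ is increasing, I want to compare $\int_0^1 M_1(r,f)\, 2\, d\wt\mu(r)$ with $2\int_0^1 M_1(r,f)\, 2r\, d\wt\mu(r)$. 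This reduces to showing $\int_0^1 g(r)\, d\nu(r) \le 2\int_0^1 g(r)\cdot 2r\, d\nu(r)$ for every increasing $g\ge 0$ and every finite positive measure $d\nu = d\wt\mu$; equivalently $\int_0^1 g(r)(1 - 4r)\, d\nu(r) \le \int_0^1 g(r)\, d\nu(r)$... this does not close cleanly, so the actual argument must use that $\wt\mu$ itself is increasing, not merely a measure.

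So the real plan is: integrate by parts (or use Fubini) to rewrite both integrals in terms of the tail $t \mapsto \int_t^1 M_1(r,f)\,$(weight)$\, d\wt\mu(r)$ and exploit that $\wt\mu$ increasing forces mass toward $r=1$, where the factor $1/r$ is harmless. Concretely, I would show $\int_0^1 h(r)\, d\wt\mu(r) \le 2\int_0^1 r\, h(r)\, d\wt\mu(r)$ for every nonnegative increasing $h$ when $\wt\mu$ is increasing — because the increasing weight $\wt\mu$ puts at least half its total ``$h$-mass'' on $[1/2,1]$ in a suitable averaged sense, and on $[1/2,1]$ one has $r \ge 1/2$ so $h(r) \le 2 r\, h(r)$; on $[0,1/2]$ one controls $\int_0^{1/2} h\, d\wt\mu$ by $h(1/2)\wt\mu(1/2) \le h(1/2)\wt\mu([1/2,1]) \le 2\int_{1/2}^1 r\, h(r)\, d\wt\mu(r)$ using $\wt\mu(1/2) \le \wt\mu([1/2,1])$ (this is exactly where ``$\wt\mu$ increasing'' enters) together with monotonicity of $h$. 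Combining the two pieces yields the factor $2$. The main obstacle is making this last comparison rigorous with the correct constant — verifying that ``$\wt\mu$ increasing'' gives precisely $\wt\mu([0,1/2]) \le \wt\mu([1/2,1])$ in the sense needed, and checking there is no boundary-term leakage when $\wt\mu$ has an atom at $1$. Everything else is the routine dilation/monotone-mean machinery already used in Theorem~\ref{thm:norm-szop-Ap}.
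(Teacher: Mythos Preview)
Your overall strategy is exactly the paper's: use $M_1(r,\bsop f)=M_1(r,\szop f)/r\le K\,M_1(r,f)/r$ (the Hardy bound on dilations, as in Theorem~\ref{thm:norm-szop-Ap}) and then prove a weighted inequality that trades the missing factor of $r$ for a constant $2$, using that both $M_1(r,f)$ and $\wt\mu$ are increasing. The paper packages this second step as the lemma $\|zg\|_{A^1(\mu)}\ge\tfrac12\|g\|_{A^1(\mu)}$ applied to $g=\bsop f$; your version is the equivalent inequality $\int_0^1 h\,d\wt\mu\le 2\int_0^1 r\,h\,d\wt\mu$ for $h(r)=M_1(r,f)$.

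The gap is in your sketched proof of that inequality. Your split-at-$1/2$ argument, as written, gives only a factor $4$: you bound $\int_{1/2}^1 h\,d\wt\mu\le 2\int_{1/2}^1 r\,h\,d\wt\mu$ and separately $\int_0^{1/2} h\,d\wt\mu\le h(1/2)\,\wt\mu([0,1/2])\le h(1/2)\,\wt\mu([1/2,1])\le 2\int_{1/2}^1 r\,h\,d\wt\mu$, but both bounds land on the \emph{same} right-hand integral, so adding them gives $\int_0^1 h\,d\wt\mu\le 4\int_{1/2}^1 r\,h\,d\wt\mu\le 4\int_0^1 r\,h\,d\wt\mu$. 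The correct constant comes from pairing $r$ with $1-r$ rather than bounding the two halves independently: for $0\le r\le 1/2$ one has
\[
h(r)\wt\mu(r)+h(1-r)\wt\mu(1-r)\;\le\;2\bigl[r\,h(r)\wt\mu(r)+(1-r)\,h(1-r)\wt\mu(1-r)\bigr],
\]
which after cancelling $(1-2r)\ge 0$ reduces to $h(r)\wt\mu(r)\le h(1-r)\wt\mu(1-r)$, true because both $h$ and $\wt\mu$ are increasing. Integrating over $r\in[0,1/2]$ gives the factor $2$ exactly. This reflection trick is precisely how the paper proves its lemma, so once you replace the split-and-bound step by this pairing, your argument and the paper's coincide.
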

\begin{proof}
This theorem follows immediately from 
Theorem \ref{thm:norm-szop-Ap} and the 
following lemma.
\end{proof}

\begin{lemma}
Let $\mu$ be an increasing radial 
measure.  Then
$\|zf\|_{A^1(\mu)} \geq (1/2)\|f\|_{A^1(\mu)}$.
\end{lemma}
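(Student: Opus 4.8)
The plan is to reduce the claim $\|zf\|_{A^1(\mu)} \geq \tfrac12 \|f\|_{A^1(\mu)}$ to a pointwise-in-$r$ statement about integral means and then exploit the monotonicity of $\mu$. Writing $\mu\,dA = \widetilde{\mu}(r)\,2r\,dr\,d\theta$ as in the previous proof, we have
\[
\|zf\|_{A^1(\mu)} = \int_0^1 r\, M_1(r,f)\, 2r\, d\widetilde{\mu}(r),
\qquad
\|f\|_{A^1(\mu)} = \int_0^1 M_1(r,f)\, 2r\, d\widetilde{\mu}(r),
\]
since $M_1(r,zf) = r\,M_1(r,f)$. So the factor of $r$ is the whole difficulty: on the inner part of the disc it is small, and we must recover the lost mass from the fact that $M_1(\cdot,f)$ is increasing (as $f$ is analytic) together with the fact that $\widetilde\mu$ puts more weight near the boundary.

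First I would set $\varphi(r) = M_1(r,f)$, which is nondecreasing on $[0,1)$, and $d\nu(r) = 2r\,d\widetilde\mu(r)$, a positive measure; the claim becomes
\[
\int_0^1 r\,\varphi(r)\, d\nu(r) \;\geq\; \frac12 \int_0^1 \varphi(r)\, d\nu(r).
\]
The natural approach is to split at $r = 1/2$. On $[1/2,1)$ we have $r\varphi(r) \geq \tfrac12\varphi(r)$ trivially, so it suffices to show that the deficit incurred on $[0,1/2)$, namely $\int_0^{1/2}(\tfrac12 - r)\varphi(r)\,d\nu(r)$, is dominated by the surplus $\int_{1/2}^1 (r - \tfrac12)\varphi(r)\,d\nu(r)$ on the outer half. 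Since $\varphi$ is increasing, $\varphi(r) \leq \varphi(1/2)$ on the inner half and $\varphi(r) \geq \varphi(1/2)$ on the outer half, so it would be enough to prove the corresponding inequality with $\varphi$ replaced by the constant $\varphi(1/2)$, i.e.\ $\int_0^{1/2}(\tfrac12 - r)\,d\nu(r) \leq \int_{1/2}^1 (r-\tfrac12)\,d\nu(r)$. Here is where the monotonicity of $\mu$ enters: writing $d\nu(r) = 2r\,d\widetilde\mu(r)$ and integrating by parts (or comparing to the measure $\widetilde\mu$ which, being increasing, dominates a multiple of Lebesgue measure appropriately weighted toward $1$), the outer integral picks up the larger $\widetilde\mu$-mass, and one checks that $\int_0^{1/2}(\tfrac12-r)\,2r\,dr \le \int_{1/2}^1(r-\tfrac12)\,2r\,dr$ already holds for Lebesgue measure, with the monotone weight only helping.

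The main obstacle I anticipate is handling the weight $\widetilde\mu$ cleanly: "increasing radial measure" must be interpreted correctly (the distribution function $\widetilde\mu$ is nondecreasing, so $d\widetilde\mu \ge 0$ and moreover the measure favors large $r$), and one needs the comparison between the inner-half deficit and outer-half surplus to survive the presence of this weight rather than just for $dr$. The technically delicate point is that $\varphi(1/2)$ may be $0$ or the measure $\nu$ may be concentrated near $0$; but if $\varphi(1/2) = 0$ then $\varphi \equiv 0$ on $[0,1/2]$ and the inner integral vanishes outright, while the favoring of large $r$ by $\widetilde\mu$ prevents $\nu$ from being too concentrated near the origin relative to its mass on $[1/2,1)$. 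I would make this precise by the change of perspective $\int_0^1 g(r)\,d\nu(r)$ with $g(r) = (r - \tfrac12)$ and noting $\int_0^1 (r-\tfrac12)\,2r\,d\widetilde\mu(r) \ge 0$ whenever $\widetilde\mu$ is nondecreasing, which is exactly the constant-$\varphi$ version of the inequality; the increasing rearrangement of $\varphi$ then only improves matters. This last observation is essentially a Chebyshev-type correlation inequality between the increasing function $\varphi$ and the increasing function $r \mapsto r$ against the measure $d\nu$, and that is the cleanest way to finish: $\int r\varphi\,d\nu \cdot \nu([0,1)) \ge \int r\,d\nu \cdot \int \varphi\,d\nu$, combined with $\int r\,d\nu \ge \tfrac12 \nu([0,1))$, which is again the constant case.
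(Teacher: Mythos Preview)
Your proposal is correct and leads to a valid proof; the route differs from the paper's, though it rests on the same two monotonicity facts ($M_1(r,f)$ increasing in $r$, and $\widetilde{\mu}$ increasing).

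The paper argues by a direct reflection about $r=1/2$: writing
\[
\|f\|_{A^1(\mu)} = \int_0^{1/2}\bigl[M_1(r,f)\,2r\,\widetilde{\mu}(r) + M_1(1-r,f)\,2(1-r)\,\widetilde{\mu}(1-r)\bigr]\,dr,
\]
one compares this pointwise in $r\in[0,1/2]$ with the analogous expression for $2\|zf\|_{A^1(\mu)} = \int_0^1 4r^2 M_1(r,f)\,\widetilde{\mu}(r)\,dr$. The pointwise inequality reduces algebraically to $r\,g(r)\le (1-r)\,g(1-r)$ for the increasing function $g(r)=M_1(r,f)\widetilde{\mu}(r)$, which is immediate. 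So the paper never separates the roles of $\varphi$ and $\widetilde{\mu}$: it bundles them into a single increasing factor and pairs $r$ with $1-r$.

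Your approach instead factors through Chebyshev's correlation inequality: with $d\nu(r)=2r\,\widetilde{\mu}(r)\,dr$, the increasing functions $r\mapsto r$ and $r\mapsto\varphi(r)$ are positively correlated against $d\nu$, reducing the claim to the constant-$\varphi$ case $\int_0^1(r-\tfrac12)\,d\nu(r)\ge 0$. That residual inequality you leave as ``monotone weight only helps''; it is true, but to make it precise you need either another invocation of Chebyshev (now with $r$ and $\widetilde{\mu}$ against $2r\,dr$) or the very same reflection trick the paper uses. In effect your argument is a two-layer Chebyshev that unfolds into the paper's one-step pairing. What your packaging buys is a clear conceptual explanation of \emph{why} the inequality holds (positive correlation of increasing functions); what the paper's direct pairing buys is brevity, since it handles both monotone factors simultaneously. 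One notational point: you write $d\nu(r)=2r\,d\widetilde{\mu}(r)$ while treating $\widetilde{\mu}$ as a density; this should read $d\nu(r)=2r\,\widetilde{\mu}(r)\,dr$ to match your own setup.
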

\begin{proof}
Let $\mu \, dA = 
 \widetilde{\mu}(r) 2r \, dr \, d\theta$.  
Note that 
\[
\begin{split}
&\phantom{{}={}}\int_0^1 M_1(r,f) 2r  \, \widetilde{\mu}(r) dr \\ &= 
\int_0^{1/2} M_1(r,f) 2r + M_1(1-r,f)2(1-r) \widetilde{\mu}(1-r) \, dr \\ 
&=
\int_0^{1/2} M_1(r,f) 4(1/2)r + M_1(1-r,f)4(1/2)(1-r) \, \widetilde{\mu}(1-r)dr \\
\end{split}
\]
Now note that $M_1(r,f) \leq M_1(1-r,f)$ 
and $\widetilde{\mu}(r) \leq \widetilde{\mu}(1-r)$
for $0 \leq r \leq 1/2$ to see that the last displayed 
expression is at most
\[
\begin{split}
&\int_0^{1/2} M_1(r,f) 4r^2 \widetilde{\mu}(r) 
  + M_1(1-r,f)4(1-r)^2 \,\widetilde{\mu}(1-r) dr \\
=2&\int_0^1 M_1(r,z f) 2r \, \widetilde{\mu}(r) dr.
\end{split}
\]
\end{proof}

Using a different method, we now establish a 
bound for $\szop$ on the real harmonic Bergman space 
$a^1_{\mathbb{R}}$. 
\begin{theorem} The norm of the backward shift on the real harmonic 
Bergman space 
$a^1_{\mathbb{R}}$ is at most $1.835$.  In fact, the same estimate holds on any subspace 
$X$ of $L^1$ with the property that $u \in X$ implies that 
$|u(re^{i\theta})| \leq (1-r)^{-2}$ and the property that the average value 
of any $u \in X$ on circles centered at the origin is constant. 
\end{theorem}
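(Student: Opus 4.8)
The plan is to argue circle by circle, using the mean value hypothesis to create cancellation in $\szop u = u - u(0)$ and the pointwise bound to make that cancellation quantitative. (I read $\szop$ here as the operator $u \mapsto u - u(0)$, where $u(0)$ is the common value of the circular averages of $u$.) Normalize so that $\|u\|_{A^1} = 1$, writing $\|\cdot\|_{A^1}$ for the $L^1$-norm over $\D$ with normalized area measure. Since $M_1(r,u) \ge |u(0)|$ for every $r$ by the circular-average hypothesis, integrating in $r$ against $2r\,dr$ gives $c := |u(0)| \le 1$, and we may assume $u(0) = c \ge 0$. Fix $r$, set $M(r) = M_1(r,u)$, let $\phi(\theta) = u(re^{i\theta})$, and let $dm = d\theta/(2\pi)$, so that $\int \phi\,dm = c$ and $\int |\phi|\,dm = M(r)$.

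First I would record the exact identity
\[
M_1(r,\szop u) = \int |\phi - c|\,dm = M(r) + c - 2J_r - 2c\,m_r ,
\]
where $m_r = m(\{\phi \ge c\})$ and $J_r = \int_{\{0 < \phi < c\}}\phi\,dm \ge 0$; this follows by evaluating $|\phi| + c - |\phi - c|$ on the three sets $\{\phi \le 0\}$, $\{0 < \phi < c\}$, $\{\phi \ge c\}$, where it equals $0$, $2\phi$, $2c$ respectively. The mean value hypothesis enters through $\int_{\{\phi > 0\}}\phi\,dm = \tfrac12(M(r)+c)$, which gives $J_r + \int_{\{\phi\ge c\}}\phi\,dm = \tfrac12(M(r)+c)$; the pointwise bound $|\phi| \le (1-r)^{-2} =: R$ gives $\int_{\{\phi\ge c\}}\phi\,dm \le R\,m_r$, hence $J_r + R\,m_r \ge \tfrac12(M(r)+c)$. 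Writing $2J_r + 2c\,m_r = \tfrac{2c}{R}(J_r + R\,m_r) + 2J_r\bigl(1 - \tfrac{c}{R}\bigr)$ and using $J_r \ge 0$ and $R \ge 1 \ge c$, one gets $2J_r + 2c\,m_r \ge \tfrac{c}{R}(M(r)+c)$, so that
\[
M_1(r,\szop u) \le \bigl(M(r) + c\bigr)\bigl(1 - c(1-r)^2\bigr),
\]
and note $1 - c(1-r)^2 \ge 0$ since $c(1-r)^2 \le c \le 1$.

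Then I would integrate against $2r\,dr$ over $[0,1]$. Using $\int_0^1 (M(r)+c)\,2r\,dr = \|u\|_{A^1} + c = 1 + c$, the elementary value $\int_0^1 (1-r)^2\,2r\,dr = \tfrac16$, and $M(r) \ge 0$, this gives
\[
\|\szop u\|_{A^1} \le 1 + c - \tfrac16 c^2 ,
\]
which is increasing in $c$ on $[0,1]$ and hence at most $\tfrac{11}{6} < 1.835$. The argument uses only that the common circular average of $u$ equals $u(0)$ and that $|u(z)| \le (1-|z|)^{-2}$ when $\|u\|_{A^1} = 1$; for $a^1_{\mathbb{R}}$ these are the mean value property and the standard estimate obtained from subharmonicity of $|u|$ on the disc of radius $1-|z|$ about $z$. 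Thus the theorem follows, in fact with the slightly better constant $11/6$.

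The step I expect to be the crux is the per-circle inequality: one must retain both $J_r$ and $m_r$ in the estimate rather than discarding $J_r$ — a configuration with $J_r$ large is precisely one in which the $-2J_r$ term already does the work — and one must check that the resulting factor $1 - c(1-r)^2$ stays nonnegative so that the inequality survives integration in $r$. Everything else is routine.
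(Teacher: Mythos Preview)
Your argument is correct and in fact gives the slightly sharper constant $11/6\approx 1.8333$. The route, however, is genuinely different from the paper's. The paper argues by contradiction: assuming $\|u-u(0)\|>2-\alpha$ with $\|u\|=1$, it introduces a threshold $\beta$, sets $A=\{u>\beta\}$, uses the pointwise bound $|u|\le(1-r)^{-2}$ on each circle to force a \emph{lower} bound on $m_r(A_r)$, integrates in $r$ to obtain a closed-form lower bound for $m(A)$ (involving $\log\tfrac{1+\sqrt\beta}{1-\sqrt\beta}$), and pairs this with the upper bound $m(A)\le\alpha/(2\beta)$ coming from the size of $\|u-u(0)\|$; a numerical choice $\alpha=0.165$, $\beta=0.506$ then yields the contradiction. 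Your approach instead derives a clean per-circle inequality $M_1(r,u-u(0))\le (M_1(r,u)+c)\bigl(1-c(1-r)^2\bigr)$ directly from the identity for $|\phi|+c-|\phi-c|$ and the constraint $J_r+R\,m_r\ge\tfrac12(M(r)+c)$, then integrates; no level-set thresholding or two-parameter optimization is needed. What the paper's method buys is continuity with its concentration-of-mass theme developed for $H^1$; what yours buys is a shorter, parameter-free argument and an explicit rational constant. One small remark: since you already know $M(r)\ge c$, using $M(r)+c\ge 2c$ rather than $M(r)\ge 0$ in the final integration would improve your bound to $1+c-c^2/3\le 5/3$.
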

\begin{proof}
Suppose that there is a $u \in a^1_{\mathbb{R}}$ with $\|u\|=1$ and 
$\|u-u(0)\| > 2 - \alpha$, where 
$0 < \alpha < 1/2$.  This implies that 
$u(0) > 1 - \alpha$. 
Without loss of generality assume $u(0) > 0$. 
Choose $\beta$ such that 
$0 < \beta < 1/2$ and define $A = \{z : u > \beta\}$ and 
$B = \{z: u \leq \beta\}$.  

Now, we have that $|u(z)| \leq (1-r)^{-2}$  
(see \cite[Chapter 1, Theorem 1]{D_Ap}). 
Let $A_r = A \cap \{z: |z| = r\}$ and define $B_r$ similarly.  

Let $m$ denote normalized area measure and 
let $m_r$ denote Hausdorff $1$-measure on the circle of radius $r$ 
divided by $2\pi r$. 
Since 
\[\int_{A_r} u \, dm_r + \beta > \int u \, dm_r = u(0) > 1 - \alpha\] we have 
$\int_{A_r} u \, dm_r> 1 - \alpha - \beta$.

Notice that $\int_{B_r} u \, d m_r \leq m_r(B_r) \beta = (1-m_r(A_r))\beta$ and 
$\int_{A_r} u \, d m_r \leq m_r(A_r)/(1-r)^2$.  Since 
$\int_{A_r} u \, d m_r + \int_{B_r} u \, d m_r =  u(0)$, we have that 
\[
m_r(A_r) \geq \frac{u(0)-\beta}{(1-r)^{-2}-\beta},
\]
and thus
\[
m_r(A_r) \geq \frac{1-\alpha-\beta}{(1-r)^{-2}-\beta}.
\]
Therefore
\[
m(A) = \int_0^1 m(A_r) 2r \, dr \geq 
\frac{1-\alpha-\beta}{\beta}\left(
  \frac{1}{2\sqrt{\beta}} \ln \left( 
    \frac{1+\sqrt{\beta}}{1-\sqrt{\beta}} \right) - 1 \right).
\]

However, 
\[
\int_B |u-u(0)|\, dm \leq \int_B |u| \, dm + \int_B u(0)\,dm 
= \int_B |u|\,dm + m(B) u(0).
\]
If $z \in A$ we have $|u(z)-u(0)| \leq |u(z)| + u(0) - 2\beta$.
Therefore, 
\[
\int_A |u - u(0)| \,dm  \leq \int_A |u|\, dm + m(A)u(0) - 2m(A) \beta
\]
and thus
\[
\int |u-u(0)| \, dm \leq \int |u| \, dm + u(0) - 2 m(A) \beta.
\]

This implies that
\[
\int |u-u(0)| - 1 - u(0) \leq -2 m(A) \beta,
\]
so 
\[
m(A) \leq \frac{1}{2\beta} (1 + u(0) - \|u-u(0)\|) \leq \frac{\alpha}{2\beta}.
\]

Therefore
\[
\frac{1-\alpha-\beta}{\beta}\left(
  \frac{1}{2\sqrt{\beta}} \ln \left( 
    \frac{1+\sqrt{\beta}}{1-\sqrt{\beta}} \right) - 1 \right)
\leq
\frac{\alpha}{2\beta}.
\]
Choosing $\beta = .506$ and $\alpha = .165$ gives a contradiction. 

\end{proof}

\section{The norm of the operator 
$\szop_r$.}
Suppose that $f$ and $g$ are functions defined on the 
interval $[a,b]$.  By the convolution of $f$ and $g$, we 
mean the function 
\[
f*g(x) = \int_a^b \widetilde{f}(y) \widetilde{g}(x-y) \, dy
,
\]
where $\widetilde{f}$ and $\widetilde{g}$ are the 
periodic extensions of $f$ and $g$ to the real line. 

We define the operator 
$\szop_{r}: h^1_{\mathbb{R}} \rightarrow h^1_{\mathbb{R}}$ to be the 
operator $f \mapsto (\szop f)_r$, where 
$(\szop f)_r(z) = (\szop f)(rz)$.  
Equivalently 
$\szop_r$ can be thought of as the operator 
obtained by applying $\szop$ and then 
restricting the function obtained to the 
circle centered at the origin with radius 
$r$. In this section, we investigate the 
norm of 
$\szop_r$ and find that the Poisson kernel is 
a solution to the problem of finding a function 
$f \in h^1_{\mathbb{R}}$ of norm $1$ such that 
$\|\szop_r f\|$ is as large as possible.  

In order to proceed we need to prove several 
lemmas.  The first is elementary but 
is surprisingly useful.
\begin{lemma}\label{lemma:alt-f-minus-mu}
Let $f$ be a real function with average $\mu$ and 
let $\nu$ be a finite measure.  Then 
\[
\int |f-\mu| \, d\nu 
= 2 \int_{\{x:f > \mu\}} f - \mu \, d\nu.
\]
\end{lemma}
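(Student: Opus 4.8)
The plan is to reduce the whole statement to the single fact encoded by the hypothesis ``average $\mu$'', namely that $\int (f - \mu)\, d\nu = 0$, where the integral runs over the entire domain of $f$. (Concretely, ``$\mu$ is the average of $f$'' should be read as $\int f\, d\nu = \mu\, \nu(X)$, equivalently $\int (f-\mu)\, d\nu = 0$; this is the only property of $\mu$ that gets used, and it holds automatically when $\nu$ is normalized arc length and $\mu$ the ordinary mean, as in the applications that follow. We tacitly assume $f \in L^1(\nu)$ so that every integral below is finite.)

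First I would decompose the domain into the disjoint sets $P = \{x : f(x) > \mu\}$ and $N = \{x : f(x) \le \mu\}$, which together exhaust the domain. Pointwise one has $|f - \mu| = (f - \mu)\mathbf{1}_P + (\mu - f)\mathbf{1}_N$, so integrating against $\nu$ gives
\[
\int |f - \mu|\, d\nu = \int_P (f - \mu)\, d\nu + \int_N (\mu - f)\, d\nu .
\]
Next I would split the mean-zero identity over the same two sets, $\int_P (f - \mu)\, d\nu + \int_N (f - \mu)\, d\nu = 0$, which rearranges to
\[
\int_N (\mu - f)\, d\nu = \int_P (f - \mu)\, d\nu .
\]
Substituting this into the previous display makes the two terms on its right-hand side equal, and the claimed identity $\int |f - \mu|\, d\nu = 2 \int_P (f - \mu)\, d\nu$ drops out. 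It is immaterial whether the null-contributing set $\{f = \mu\}$ is grouped with $P$ or with $N$, since $f - \mu$ vanishes there.

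There is essentially no obstacle here: the lemma is a one-line consequence of the definition of the absolute value together with the mean-zero property, and this elementariness is precisely what makes it convenient to invoke repeatedly later. The only points worth a word of care are the standing integrability assumption on $f$ and the reading of ``average'' as being relative to $\nu$ (or, in the applications, relative to normalized arc length, for which the constant $\mu$ satisfies $\int (f - \mu) = 0$ by definition).
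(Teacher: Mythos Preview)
Your proof is correct and is essentially the same as the paper's: both split the domain according to the sign of $f-\mu$, use the mean-zero condition $\int (f-\mu)\,d\nu=0$ to match the positive and negative parts, and note that the set $\{f=\mu\}$ contributes nothing. The paper's write-up is just a terser version of your argument.
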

\begin{proof}
Note that 
$\int_{\{x:f = \mu\}} f - \mu \, d\nu = 0$.  
Since $\int (f-\mu) \, d\nu = 0$, this 
implies that 
$\int_{\{x:f > \mu\}} f - \mu \, d\nu = 
\int_{\{x:f < \mu\}} |f - \mu| \, d\nu$.
\end{proof}

The next theorem basically deals with 
maximizing $\int_{\{x:f > \mu\}} |f-\mu|$, where 
$f$ is itself an average of rearrangements of 
some other function.  However, we present the 
theorem in a discreet form which is easier 
to prove. 
\begin{theorem}\label{thm:matrix-sum-minus-mu}
Suppose we are given an $m \times n$ matrix $A$.  Let 
$\mu$ be a fixed number, and let 
$C_j = \sum_{k=1}^m a_{kj}$.  
Define $D_j = \max(C_j - \mu, 0)$, and let 
$D = \sum_{j=1}^n D_j$.  
Suppose that $D_j \geq D_k$ but $a_{ij} \leq a_{ik}$.  Then we 
do not decrease $D$ by interchanging $a_{ij}$ and $a_{ik}$.
\end{theorem}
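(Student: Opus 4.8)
The plan is to reduce the statement to an elementary inequality about four real numbers. Consider the effect of the swap on the column sums $C_j$ and $C_k$. Write $C_j = a_{ij} + s_j$ and $C_k = a_{ik} + s_k$, where $s_j = \sum_{\ell \neq i} a_{\ell j}$ and $s_k = \sum_{\ell \neq i} a_{\ell k}$ are unaffected by interchanging $a_{ij}$ and $a_{ik}$. After the swap, the new column sums are $C_j' = a_{ik} + s_j$ and $C_k' = a_{ij} + s_k$. Every other column sum, and hence every other $D_\ell$, is unchanged, so it suffices to show
\[
\max(a_{ik} + s_j - \mu, 0) + \max(a_{ij} + s_k - \mu, 0) \ge \max(a_{ij} + s_j - \mu, 0) + \max(a_{ik} + s_k - \mu, 0),
\]
given the hypotheses $D_j \ge D_k$ (that is, $\max(a_{ij}+s_j-\mu,0) \ge \max(a_{ik}+s_k-\mu,0)$) and $a_{ij} \le a_{ik}$.

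First I would set $x = s_j - \mu$, $y = s_k - \mu$, $p = a_{ij}$, $q = a_{ik}$, so the hypotheses become $\max(p+x,0) \ge \max(q+y,0)$ and $p \le q$, and the goal is $\max(q+x,0) + \max(p+y,0) \ge \max(p+x,0) + \max(q+y,0)$. Define $\varphi(t) = \max(t,0)$, which is convex and nondecreasing. The inequality says $\varphi(q+x) + \varphi(p+y) \ge \varphi(p+x) + \varphi(q+y)$, i.e.\ that the function $t \mapsto \varphi(p+t) - \varphi(q+t)$ is nonincreasing in... actually the cleanest route: since $p \le q$, we have $q + x \ge p + x$ and $q + y \ge p + y$; the claim is equivalent to $\varphi(q+x) - \varphi(p+x) \ge \varphi(q+y) - \varphi(p+y)$, which would follow if $x \ge y$ because $\varphi$ convex implies $t \mapsto \varphi(t + (q-p)) - \varphi(t)$ is nondecreasing. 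So the crux is to show $x \ge y$, equivalently $s_j \ge s_k$. But this is where the hypothesis $D_j \ge D_k$ enters: from $\max(p+x,0) \ge \max(q+y,0) \ge q + y \ge p + y$, we cannot directly conclude $x \ge y$ in general, so a short case analysis is needed.

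The main obstacle — and the only real content — is handling the case $x < y$. Here I would argue directly rather than through monotonicity. If $x < y$ then, combined with $p \le q$, the two "crossed" terms satisfy $p + y > p + x$ and $q + x < q + y$; one checks the rearrangement inequality for $\varphi$ by splitting on the signs of the four arguments $p+x, p+y, q+x, q+y$. Since $\varphi$ is piecewise linear with slopes $0$ and $1$, each case collapses to comparing sums of the positive parts, and in every case the inequality $D_j \ge D_k$ (which forces $p + x \ge q + y$ whenever $q + y > 0$) supplies exactly what is needed; when $q + y \le 0$ the right-hand side loses that term and the inequality is immediate since $\varphi \ge 0$. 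Assembling the cases gives the claim in full generality, so no decrease in $D$ occurs.

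Now I would write this out.

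\begin{proof}
Interchanging $a_{ij}$ and $a_{ik}$ affects only the column sums $C_j$ and $C_k$, hence only $D_j$ and $D_k$; all other $D_\ell$ are unchanged. Write $C_j = a_{ij} + s_j$ and $C_k = a_{ik} + s_k$ where $s_j = \sum_{\ell \neq i} a_{\ell j}$ and $s_k = \sum_{\ell \neq i} a_{\ell k}$ do not change under the swap. Put $\varphi(t) = \max(t,0)$, $x = s_j - \mu$, $y = s_k - \mu$, $p = a_{ij}$, $q = a_{ik}$. The hypotheses read $p \le q$ and $\varphi(p+x) \ge \varphi(q+y)$, and it suffices to prove
\[
\varphi(q+x) + \varphi(p+y) \ge \varphi(p+x) + \varphi(q+y).
\]

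Suppose first that $x \ge y$. Since $\varphi$ is convex and $q - p \ge 0$, the function $t \mapsto \varphi(t + (q-p)) - \varphi(t)$ is nondecreasing, so evaluating at $t = p + x$ and $t = p + y$ gives $\varphi(q+x) - \varphi(p+x) \ge \varphi(q+y) - \varphi(p+y)$, which rearranges to the desired inequality.

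Now suppose $x < y$. If $q + y \le 0$, then $\varphi(q+y) = 0$ and also $\varphi(p+y) = 0$ since $p \le q$; moreover $\varphi(p+x) \le \varphi(q+y) = 0$ forces $\varphi(p+x) = 0$, so the inequality reduces to $\varphi(q+x) \ge 0$, which holds. Hence assume $q + y > 0$, so that $\varphi(q+y) = q+y$ and the hypothesis $\varphi(p+x) \ge \varphi(q+y)$ gives $p + x \ge q + y > 0$, whence $\varphi(p+x) = p+x$ and also $\varphi(q+x) = q + x \ge p + x > 0$. The inequality to prove becomes
\[
(q+x) + \varphi(p+y) \ge (p+x) + (q+y),
\]
i.e.\ $\varphi(p+y) \ge p + y$, which is true for every real number $p+y$. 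This completes the case analysis, so $D$ does not decrease.
\end{proof}
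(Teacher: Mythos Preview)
Your proof is correct. Both your argument and the paper's reduce to showing that the two affected terms satisfy $D_j' + D_k' \ge D_j + D_k$ and then carry out a short case analysis, but the organization differs. The paper cases on which of $D_j$, $D_k$, $D_k'$ are strictly positive and computes directly in each case. You instead introduce $\varphi(t)=\max(t,0)$ and split according to whether $s_j \ge s_k$ (equivalently $x \ge y$): in that branch the inequality follows from convexity of $\varphi$ alone, without using the hypothesis $D_j \ge D_k$ at all; only in the branch $x<y$ do you invoke $\varphi(p+x)\ge\varphi(q+y)$, and there the argument collapses to the trivial fact $\varphi(p+y)\ge p+y$. This organization is a little cleaner in that it isolates precisely where the hypothesis is needed, and the convexity step would generalize immediately if $\varphi$ were replaced by any convex nondecreasing function; the paper's version, by contrast, is tied to the piecewise-linear form of $\max(\cdot,0)$ but is slightly more self-contained since it avoids appealing to a general monotonicity-of-increments lemma.
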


\begin{proof}
We may assume without loss of generality that $D_1 \geq D_2$, but that 
$a_{11} \leq a_{12}$.  Let $A'$ be the matrix formed by interchanging 
$a_{11}$ and $a_{12}$. We claim that $D' \geq D$. 

If $D = 0$ then we are done.  If $D_1 > 0$ and $D_2 = 0$, then 
since $D_1' \geq D_1$ and $D_2' \geq 0$, we are done.  Suppose then that 
$D_1 > 0$ and $D_2 > 0$.  If $D_2' > 0$, then also 
$D_1' > 0$ so  
$D_1 + D_2 = D_1' + D_2' = C_1 + C_2 - 2\mu$, so we are done.  
However, suppose that $D_2 > 0$ but $D_2' = 0$.  
Let $B_1$ the the sum of the 
rest of the entries in column $1$, and $B_2$ be the sum of the 
rest of the entries in column $2$.  Then 
$D_1 + D_2 = a_{11} + a_{12} + B_1 + B_2 - 2\mu$ and 
$D_1' + D_2'= a_{12} + B_1 - \mu$.  So we will have 
$D_1 + D_2 \leq D_1' + D_2'$ if $a_{11} + B_2 \leq \mu$.  But this is 
true since $D_2' = 0$.  
\end{proof}

We now have the following lemma, which is a continuous form of the above 
theorem.  We omit the proof, since we prove a more general version in 
\ref{lemma:negative-conv-bound}. 
Note that $P * 1$ is the integral of $P$, and similarly for 
$P * f * 1$ and $P * f$. 
\begin{lemma} \label{cor:nonnegative-conv-bound}
 Suppose that $P$ 
and $f$ are nonnegative integrable functions on 
$[a,b]$ and that $\|f\|_1 = 1$, where $\|\cdot\|_1$ denotes 
the $L^1$ norm with normalized Lebesgue measure.   
Then
$\|P * f - P * f * 1\|_1 \leq \|P - P * 1\|_1 $. 
\end{lemma}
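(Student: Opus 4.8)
The plan is to read $P*f$ as an average of translates of $P$ and then appeal to convexity of the $L^1$ norm. Write $m$ for normalized Lebesgue measure on $[a,b]$ (so that, as in the lemma's parenthetical remark, $g*1$ is the $m$-integral of $g$), and for $y\in[a,b]$ let $\tau_y P$ denote the periodic translate $x\mapsto\widetilde P(x-y)$. Two elementary facts will drive everything: $\int\tau_y P\,dm=\int P\,dm$ for every $y$ (translation invariance of the integral of a periodic function), hence $\tau_y P*1=P*1$; and $\|\tau_y P-c\|_1=\|P-c\|_1$ for every constant $c$, since $|\widetilde P(\cdot-y)-c|$ integrates over a period to the same value as $|\widetilde P-c|$.

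First I would set $d\mu=f\,dm$. Since $f\ge 0$ and $\|f\|_1=1$, this is a probability measure on $[a,b]$, and by definition of convolution $P*f(x)=\int_a^b\tau_y P(x)\,d\mu(y)$, i.e.\ $P*f$ is the $\mu$-average of the family $\{\tau_y P\}$. Applying $*1$ under the integral sign and using $\tau_y P*1=P*1$ gives $P*f*1=\int_a^b(\tau_y P*1)\,d\mu(y)=(P*1)\,\mu([a,b])=P*1$; in particular $P*f$ and $P$ share the same mean value, the constant $P*1$. Subtracting this constant from the integral representation yields $P*f-P*f*1=\int_a^b\big(\tau_y P-P*1\big)\,d\mu(y)$.

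Then the proof finishes with Minkowski's integral inequality (the triangle inequality for the $L^1$-valued integral against the positive measure $\mu$):
\[
\|P*f-P*f*1\|_1=\Big\|\int_a^b(\tau_y P-P*1)\,d\mu(y)\Big\|_1\le\int_a^b\|\tau_y P-P*1\|_1\,d\mu(y)=\|P-P*1\|_1\,\mu([a,b])=\|P-P*1\|_1,
\]
which is the asserted bound.

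There is no serious obstacle here; the one point that must be respected — and the reason the hypotheses insist $f\ge 0$ and $\|f\|_1=1$ — is that $\mu=f\,dm$ has to be a \emph{probability} measure, for it is exactly this that makes $P*f$ a genuine average (a convex combination of translates) rather than a mere signed combination, so that both the identity $P*f*1=P*1$ and the triangle-inequality step come out with the constant $1$ in front rather than $\|f\|_1$ or $\int f\,dm$. Nonnegativity of $P$ is in fact not used in this argument; it enters only in the more general signed statement (Lemma~\ref{lemma:negative-conv-bound}), where the clean averaging picture fails and one instead discretizes $f$ into a staircase, models $P*f$ by the column sums of a matrix, and invokes the rearrangement estimate of Theorem~\ref{thm:matrix-sum-minus-mu} together with Lemma~\ref{lemma:alt-f-minus-mu} to reduce to the extremal case of a single translate.
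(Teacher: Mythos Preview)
Your argument is correct, and it is genuinely different from (and considerably shorter than) what the paper does. The paper does not prove this lemma directly at all: it simply invokes the signed version, Lemma~\ref{lemma:negative-conv-bound}, whose proof discretizes $P$ and $f$ into step functions, encodes $\wt P*\wt f$ as the column sums of the matrix $(c_{j-i}d_i)$, applies the rearrangement estimate Theorem~\ref{thm:matrix-sum-minus-mu} to sort each row, and then uses Lemma~\ref{lemma:alt-f-minus-mu} plus a limiting argument. In the nonnegative case that machinery collapses to $a=1$, $b=0$, $Q=P^*$, and one recovers $\|P^*-P^**1\|_1=\|P-P*1\|_1$. Your route bypasses all of this: once $f\ge 0$ with $\|f\|_1=1$, the identity $P*f=\int\tau_yP\,d\mu(y)$ exhibits $P*f$ as a \emph{barycenter} of translates, and the bound is nothing more than the triangle inequality for the Bochner integral together with translation invariance of $\|\cdot - c\|_1$. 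This also makes transparent your side remark that nonnegativity of $P$ is irrelevant here. What the paper's heavier approach buys is precisely the signed case: when $f$ changes sign, $f\,dm$ is no longer a probability measure, the averaging picture breaks, and one is forced into the rearrangement argument.

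One small bookkeeping point: with the paper's convolution defined against $dy$ rather than $dm$, the identities $P*f=\int\tau_yP\,d\mu$ and $P*f*1=P*1$ acquire a stray factor of $b-a$. This is harmless---either assume $b-a=1$ as the paper itself does at the start of the proof of Lemma~\ref{lemma:negative-conv-bound}, or absorb the factor into the definition of $\mu$---but it is worth saying so explicitly, since as written your equalities are literally correct only on an interval of unit length.
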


\begin{corollary}
Suppose that $u$ is a nonnegative harmonic function in the unit disc
and that $0 \leq r < 1$.  Then 
\[
\begin{split}
\frac{1}{2\pi}\int_0^{2\pi} |u(re^{i\theta}) - u(0)| \, d\theta
&\leq 
u(0) \frac{1}{2\pi} \int_0^{2\pi} |P_r(e^{i\theta}) - 1| \, d\theta 
\\ &= 
u(0) \left( 2 - \frac{4}{\pi}\arccos(r) \right).
\end{split}
\]
where $P_r$ is a Poisson kernel.
\end{corollary}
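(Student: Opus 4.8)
The plan is to realize $u$ as a Poisson integral of a nonnegative measure and reduce the circle estimate to Lemma~\ref{cor:nonnegative-conv-bound}, which governs convolutions of nonnegative $L^1$ functions on an interval. First I would recall that a nonnegative harmonic function $u$ in the unit disc is the Poisson integral of a finite nonnegative Borel measure $\nu$ on $\mathbb{T}$, so that $u(re^{i\theta}) = (P_r * d\nu)(\theta)$ where $P_r$ is the Poisson kernel and the convolution is on $\partial\mathbb{D}$ (equivalently, on an interval of length $2\pi$ with the periodic convention adopted in this section). Normalizing, write $d\nu = u(0)\, d\sigma$ where $\sigma$ is a probability measure on $\mathbb{T}$; then $u(0) = \tfrac{1}{2\pi}\int u(re^{i\theta})\,d\theta$ is the average value of $u(re^{i\theta})$ for every $r$, and $P_r * 1 = 1$, so the quantity to bound is exactly $u(0)\cdot \| P_r * \sigma - (P_r*\sigma)*1 \|_1$ where the average of $P_r * \sigma$ equals $1$.

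Next I would handle the fact that $\sigma$ is a measure rather than an $L^1$ function. The cleanest route is approximation: choose probability densities $f_k \in L^1(\partial\mathbb{D})$ (for instance, $f_k = P_{\rho_k} * \sigma$ with $\rho_k \uparrow 1$, or convolutions of $\sigma$ with smooth approximate identities) such that $f_k \, d\theta \to d\sigma$ weak-$*$. For each fixed $r < 1$, the function $P_r$ is continuous, hence so is $P_r * \sigma$, and $P_r * f_k \to P_r * \sigma$ uniformly on $\partial\mathbb{D}$; since the averages are all equal to $1$, we also get $\| P_r * f_k - 1 \|_1 \to \| P_r * \sigma - 1\|_1$. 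Applying Lemma~\ref{cor:nonnegative-conv-bound} with $P = P_r$ and $f = f_k$ (both nonnegative, $\|f_k\|_1 = 1$) gives $\| P_r * f_k - P_r * f_k * 1 \|_1 \le \| P_r - P_r * 1\|_1 = \| P_r - 1\|_1$, and passing to the limit yields $\| P_r * \sigma - 1 \|_1 \le \| P_r - 1\|_1$. Multiplying through by $u(0)$ gives the displayed inequality.

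Finally I would compute the explicit constant $\tfrac{1}{2\pi}\int_0^{2\pi} |P_r(e^{i\theta}) - 1|\,d\theta = 2 - \tfrac{4}{\pi}\arccos(r)$. Since $P_r - 1 = \tfrac{1-r^2}{1-2r\cos\theta + r^2} - 1 = \tfrac{2r(\cos\theta - r)}{1 - 2r\cos\theta + r^2}$, this is nonnegative precisely when $\cos\theta \ge r$, i.e.\ for $|\theta| \le \arccos(r)$; by Lemma~\ref{lemma:alt-f-minus-mu} (applied with $\mu = 1$) the integral of $|P_r - 1|$ equals twice the integral of $P_r - 1$ over $\{\cos\theta > r\}$, which is $2\big(\tfrac{1}{2\pi}\int_{\{\cos\theta > r\}} P_r\,d\theta - \tfrac{\arccos(r)}{\pi}\big)$. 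The remaining integral $\tfrac{1}{2\pi}\int_{|\theta| < \arccos(r)} P_r(e^{i\theta})\,d\theta$ is the harmonic measure of the arc $\{|\theta| < \arccos(r)\}$ evaluated at the origin, which equals $\tfrac{\arccos(r)}{\pi}\cdot$(length fraction)\,---\,more directly, one evaluates it via the standard antiderivative $\int \tfrac{1-r^2}{1 - 2r\cos\theta + r^2}\,d\theta = 2\arctan\!\big(\tfrac{1+r}{1-r}\tan(\theta/2)\big)$, obtaining value $1 - \tfrac{\arccos(r)}{\pi}$ for that piece, and assembling the pieces gives $2 - \tfrac{4}{\pi}\arccos(r)$.

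I expect the only real subtlety to be the measure-to-function approximation step: one must make sure the weak-$*$ convergence $f_k\,d\theta \to d\sigma$ genuinely upgrades to $L^1$-convergence of $P_r * f_k$ for each fixed $r$ (it does, because $P_r$ and all its translates lie in $C(\partial\mathbb{D})$ and the total masses are bounded, so dominated convergence applies to $|P_r * f_k - P_r*\sigma|$ after noting uniform convergence of the integrands). Everything else — the representation of nonnegative harmonic functions, the invocation of Lemma~\ref{cor:nonnegative-conv-bound}, and the elementary integral — is routine.
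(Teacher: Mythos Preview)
Your proposal is correct and follows essentially the same route as the paper: both reduce to Lemma~\ref{cor:nonnegative-conv-bound} via the Poisson integral representation, and your approximation of the boundary measure by densities $f_k = P_{\rho_k}*\sigma$ is exactly the paper's dilation $u_s(z)=u(sz)$ in disguise (since $P_r * (P_{\rho_k}*\sigma) = P_{r\rho_k}*\sigma$). The only substantive difference is that you compute the constant $2 - \tfrac{4}{\pi}\arccos(r)$ on the spot, whereas the paper defers that calculation to Theorem~\ref{thm:szop-h1-r}; note, incidentally, that the integral $\tfrac{1}{2\pi}\int_{|\theta|<\arccos r} P_r\,d\theta$ is harmonic measure at the point $r$, not at the origin, though your antiderivative computation recovers the correct value $1-\arccos(r)/\pi$ regardless.
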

Note that this corollary holds for $r = 1$ if we replace 
$\frac{1}{2\pi}\int_0^{2\pi} |u(e^{i\theta}) - u(0)| \, d\theta$ by 
$\|u - u(0)\|_{h^1_{\mathbb{R}}}$. 
\begin{proof}
The rightmost equality is proved in Theorem \ref{thm:szop-h1-r}.
Also note that the inequality holds for $r = 0$ trivially.  
For other values of $r$, 
if $u$ is the real part of a function in $H^1$, then we can write 
$u(re^{i\theta}) = P_{\theta}(r e^{i \cdot})*f(e^{i\theta})$, 
where $f$ is the boundary 
value function of $u$. The result then follows from the above lemma by 
letting $P = P_r$. 

 If $u$ is not the real part of an $H^1$ function, 
then $u$ is still in 
$h^1_{\mathbb{R}}$ since it is nonnegative (see \cite[Theorem 1.1]{D_Hp}). 
Let $0 < s < 1$. 
Define $u_s$ by 
$u_s(z) = u(sz)$. Then $u_s$ is the real part of 
an $H^1$ function, since it is actually continuous in $\mathbb{D}$.  
So for fixed $r$, the above inequality is true for $u_s$.  (Note that 
$u_s(0) = u(0)$).  If we let $s \rightarrow 1$, we get the result for 
$u$, since $u(rse^{it}) \rightarrow u(re^{it})$ as $s \rightarrow 1$ 
uniformly for 
$t \in [0,2\pi)$. 
\end{proof}

We must now deal with functions that are 
allowed to be negative. 
\begin{lemma}\label{lemma:negative-conv-bound}
Suppose that $P$ 
is a nonnegative 
integrable function on 
$[\alpha,\beta]$ and $f$ is an integrable %
function such that $\|f\|_1 = 1$, 
where $\|\cdot\|_1$ denotes the $L^1$ norm with normalized Lebesgue measure.   
Let $P^*$ denote the decreasing rearrangement of 
$P$, so that 
$P^*(t) = \inf \{x: m(\{y \in [\alpha,\beta]: 
 P(y) > x\}) \leq t\}$. 
Then
$\|P * f - P * f * 1\|_1 \leq \|Q - Q * 1\|_1 $ 
where $Q$ is some function of the form 
$aP^*(x) - bP^*(\alpha + \beta -x)$, where $a + b = 1$. 
\end{lemma}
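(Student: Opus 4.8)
The plan is to discretize the convolution and reduce to the matrix inequality of Theorem~\ref{thm:matrix-sum-minus-mu}. Write $f = f^{+} - f^{-}$ and set $a = \|f^{+}\|_{1}$, $b = \|f^{-}\|_{1}$, so that $a+b = \|f\|_{1} = 1$ and $P*f = P*f^{+} - P*f^{-}$. Since $P \mapsto P*f$ and $P \mapsto P^{*}$ are continuous on $L^{1}$, as is $f \mapsto f^{\pm}$, I would first reduce by density to the case where $P$ and $f$ are continuous on $[\alpha,\beta]$ (viewed periodically); the constants $a$ and $b$ then depend continuously on the data, and it suffices to obtain the inequality in this case.

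Fix a large $N$ and equally spaced points $x_{1},\dots,x_{N}$ of $[\alpha,\beta]$, and form the $N$-column matrix whose rows are indexed by those $k$ with $f(x_{k}) \neq 0$, the $k$-th row being the cyclic shift of $\tfrac{1}{N}f(x_{k})(P(x_{1}),\dots,P(x_{N}))$ that puts $\tfrac{1}{N}f(x_{k})P(x_{j-k})$ into column $j$ (indices mod $N$). Each row is then a rearrangement of a scalar multiple of $(P(x_{1}),\dots,P(x_{N}))$ --- a positive multiple when $f(x_{k})>0$ and a negative one when $f(x_{k})<0$ --- and the column sums $\tilde c_{j} = \tfrac{1}{N}\sum_{k}P(x_{j-k})f(x_{k})$ are Riemann sums converging uniformly to $(P*f)(x_{j})$ (once the normalization of the convolution is fixed throughout). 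Their common mean $\tilde\mu = \tfrac1N\sum_{j}\tilde c_{j}$ equals $\tfrac1N\sum_{k}(\text{sum of row }k)$, hence is unchanged by permuting entries within rows, and Lemma~\ref{lemma:alt-f-minus-mu} gives $\tfrac1N\sum_{j}|\tilde c_{j}-\tilde\mu| = \tfrac2N\sum_{j}(\tilde c_{j}-\tilde\mu)^{+}$, which is $\tfrac2N D$ in the notation of Theorem~\ref{thm:matrix-sum-minus-mu}. Iterating the interchange of that theorem (which never decreases $D$) --- equivalently, observing that the column-sum vector of any arrangement is majorized by that of the arrangement in which every row is sorted into a common decreasing order, while $t\mapsto(t-\tilde\mu)^{+}$ is convex --- shows that $D$ is maximal for the latter, comonotone, arrangement. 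In it the positive rows contribute $\tfrac1N f(x_{k})^{+}$ times the $P$-values arranged in decreasing order, and the negative rows contribute their largest (least negative) entries to the columns with the largest $D_{j}$, and so on; the resulting column sums are $\tilde a\,P^{*}(x_{j}) - \tilde b\,P^{*}(x_{N+1-j})$ with $\tilde a = \tfrac1N\sum_{k}f(x_{k})^{+}$ and $\tilde b = \tfrac1N\sum_{k}f(x_{k})^{-}$, the discretization of $Q(x) = a\,P^{*}(x) - b\,P^{*}(\alpha+\beta-x)$.

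Letting $N\to\infty$ in the resulting discrete inequality $\tfrac1N\sum_{j}|\tilde c_{j}-\tilde\mu| \le \tfrac1N\sum_{j}|\tilde a P^{*}(x_{j}) - \tilde b P^{*}(x_{N+1-j}) - \tilde\mu|$, using the uniform convergence of the Riemann sums, continuity of $P^{*}$, and $\tilde a\to a$, $\tilde b\to b$, yields $\|P*f - P*f*1\|_{1} \le \|Q - Q*1\|_{1}$ with $Q = aP^{*}(x) - bP^{*}(\alpha+\beta-x)$ and $a+b=1$; here one uses that $P*f*1$ and $Q*1$ are the constant functions equal to the means of $P*f$ and $Q$, so that $P*f-P*f*1$ and $Q-Q*1$ have mean zero and Lemma~\ref{lemma:alt-f-minus-mu} applies on both sides. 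The step I expect to require the most care is the discretization itself: justifying the convergence of the cyclic Riemann-sum arrays (and pinning down which constant $\cdot*1$ returns) and, more substantively, the bookkeeping that the extremal arrangement of Theorem~\ref{thm:matrix-sum-minus-mu} has column sums exactly $aP^{*}(x) - bP^{*}(\alpha+\beta-x)$ with $a+b=1$. Finally, in the special case $f \ge 0$ one has $b=0$, $Q = P^{*}$, and $\|P^{*}-P^{*}*1\|_{1} = \|P-P*1\|_{1}$, so the argument also reproves Lemma~\ref{cor:nonnegative-conv-bound}.
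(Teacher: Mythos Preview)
Your approach is essentially the same as the paper's: both discretize the convolution into a cyclic matrix, invoke Theorem~\ref{thm:matrix-sum-minus-mu} to pass to the comonotone (row-sorted) arrangement whose column sums discretize $Q = aP^*(x) - bP^*(\alpha+\beta-x)$, and then let the mesh go to zero using Lemma~\ref{lemma:alt-f-minus-mu} on both sides. The paper uses step-function approximants with matching subinterval integrals and tracks an explicit $\epsilon$, whereas you use point evaluations and Riemann sums and identify $a=\|f^{+}\|_{1}$, $b=\|f^{-}\|_{1}$ at the outset, but the structure of the argument is identical.
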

\begin{proof}
Without loss of generality we may assume that $\alpha=0$ and 
$\beta=1$.  

We may assume that $P$ and $f$ are %
continuous (and thus uniformly continuous), 
since these functions are 
dense in $L^1([0,1])$.  
Let $\epsilon > 0$ and assume without loss of generality that 
$\epsilon < 1$.  
Let $M = \|f\|_\infty + \|P\|_\infty + 1$.  

Approximate $P$ and $f$ by the step functions
\[
\begin{split}
\widetilde{P} &= \sum_{k=0}^{n-1} c_k \chi_{[k/n,(k+1)/n)} \text{ and }\\ 
\widetilde{f} &= \sum_{k=0}^{n-1} d_k \chi_{[k/n,(k+1)/n)}
\end{split}
\] respectively, 
so that \[
\int_{k/n}^{(k+1)/n} \wt{f} \, dx = \int_{k/n}^{(k+1)/n} f \, dx 
\qquad \text{and} \qquad
\int_{k/n}^{(k+1)/n} \wt{P} \, dx = \int_{k/n}^{(k+1)/n} P \, dx
\]
for each $0 \leq k \leq n-1$.  
We may choose $n$ large enough so that 
$|f(x) - f(y)| < \epsilon$ if $|x-y| \leq 1/n$ and   
$\|P - \wt{P}\|_\infty < \epsilon$ and
$\|f - \wt{f}\|_\infty < \epsilon$.  
Thus 
$\|P * f - \wt{P} * \wt{f}\|_\infty < M\epsilon$.  
Also $\|P^* - \wt{P}^{*}\|_{\infty} < \epsilon$ 
because decreasing rearrangement decreases the $L^\infty$ distance 
between two functions.

For $k$ not in $[0, n-1]$, define $c_k$ and $d_k$ so the sequences 
$\{c_k\}_{k \in \mathbb{Z}}$ and $\{d_k\}_{k \in \mathbb{Z}}$ are periodic with 
period $n$. 
Let 
$A$ be the matrix with $(i,j)$ entry 
$a_{i,j} = c_{j-i} d_i$. 
Then the column sums 
$C_j$ of $A$ are equal to $n \wt{P} * \wt{f} (j/n)$.  Define
$D_j = \max(C_j - n, 0)$ and let $D$ be the sum of the $D_j$.

Now, let $A'$ be the matrix $A$ with rows rearranged so that they 
are in decreasing order.  Define $C_j'$ and $D_j'$ similarly.  
Then $C_j' = n \wt{Q}(j/n)$, where 
$\wt{Q}$ has the form $a\wt{P}^*(x) - b\wt{P}^*(1-x)$, where
$\wt{P}^*$ is the decreasing rearrangement 
of $\wt{P}$ and $a+b = 1$.  Define 
$Q(x) = aP^*(x) + b P^*(1-x)$. Then 
$\|Q - \wt{Q}\| \leq \epsilon$.

Because $|c_{j} - c_{j+1}| < \epsilon$ and 
$|d_{j} - d_{j+1}| < \epsilon$ we have 
$|\wt{Q}(j/n + \delta) - \wt{Q}(j/n)| < \epsilon$ and 
$|\wt{P} * \wt{f}(j/n + \delta) - \wt{P}*\wt{f}(j/n)| < M \epsilon$ 
for $|\delta| < 1/n$.  

Let $\mu = a - b$.  
Thus 
\[\begin{split}
| 
\max(\wt{P} * \wt{f}(j/n)-\mu,0) - 
\max(\wt{P} * \wt{f}(j/n + \delta) - \mu,0) |
              &< M \epsilon \text{ and} \\
| \max(\wt{Q}(j/n)-\mu,0) - 
\max(\wt{Q}(j/n + \delta) - \mu,0) | 
&< \epsilon.
\end{split}\] 
Integrating each of these expressions over $\delta \in [0, 1/n]$ and 
summing from $j = 0$ to $n-1$ gives that 
\[\left|D/n - \int_{\{x:\wt{P} * \wt{f}(x) > \mu\}} \wt{P} * \wt{f} - 
\mu \, dx\right| < \epsilon \] 
and
\[
\left|D'/n - \int_{\{x:\wt{Q}(x) > \mu\}} \wt{Q}(x) - \mu \, dx\right| < M \epsilon.\]

Theorem 
\ref{thm:matrix-sum-minus-mu} with 
$\mu = a-b$ implies that 
$D \leq D'$. Thus
\[
\int_{\{x:\wt{P} * \wt{f}(x) > \mu\}} \wt{P} * \wt{f} - \mu \, dx   \leq
(M + 1)\epsilon +
 \int_{\{x:\wt{Q}(x) > \mu\}} \wt{Q}(x) - \mu \, dx.
\]
and therefore
\[
\int_{\{x:P * f(x) > \mu\}} P * f - \mu \, dx   \leq
2(M + 1)\epsilon +
 \int_{\{x:Q(x) > \mu\}} Q(x) - \mu \, dx.
\]

Since this is true for any $\epsilon$, we must have 
\[
\int_{\{x:P * f(x) > \mu\}} P * f - \mu \, dx   \leq
 \int_{\{x:Q(x) > \mu\}} Q(x) - \mu \, dx.
\]

The result now follows from Lemma 
\ref{lemma:alt-f-minus-mu}.
\end{proof}

It is useful to have a condition under which we can conclude that 
$Q(x) = P^*(x)$.  The following theorem provides such a condition. 

\begin{theorem} \label{thm:original-greater-difference}
Suppose that $P$  
is a continuous function on 
$[\alpha,\beta]$ and that $P$ is nonnegative with average 
$1$ and 
decreasing.  
Let 
$Q(x) = a P(x) - b P(\beta+ \alpha - x)$ 
for some nonnegative real 
numbers $a$ and $b$ such that $a + b = 1$. 
Then 
there is a $c$ between 
$\alpha$ and $\beta$ such that  
$a P(c) - b P(\beta + \alpha - c) = a - b$.  If for some such $c$, 
\[
\int_\alpha^{\alpha + c} P \, dx + \int_{\beta-c}^{\beta} P \, dx \geq 2c,
\]
then
$\|P  - P * 1\|_1 \geq \|Q - Q * 1\|_1 $, where 
$\|\cdot\|_1$ denotes the $L^1$ norm with normalized Lebesgue measure. 
\end{theorem}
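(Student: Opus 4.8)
The plan is to turn both norms into one-dimensional integrals over a single subinterval by means of Lemma~\ref{lemma:alt-f-minus-mu}, and then compare them directly. First I would reduce to $\alpha=0$, $\beta=1$ by an affine change of variables, exactly as in the proof of Lemma~\ref{lemma:negative-conv-bound}; this preserves all hypotheses on $P$, keeps $\|\cdot\|_1$ normalized, and makes $P*1\equiv 1$. Next I would record the structure of $Q$. Since $a,b\ge 0$ and $P$ is decreasing, $x\mapsto aP(x)$ is decreasing and $x\mapsto -bP(\beta+\alpha-x)$ is decreasing, so $Q$ is continuous and decreasing. The substitution $y=\beta+\alpha-x$ gives $\int_\alpha^\beta P(\beta+\alpha-x)\,dx=\int_\alpha^\beta P$, so $Q$ has average $a\cdot 1-b\cdot 1=a-b=:\mu$, and hence $Q*1\equiv\mu$. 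A continuous decreasing function with average $\mu$ satisfies $Q(\alpha)\ge\mu\ge Q(\beta)$, so the intermediate value theorem produces a point $c$ with $Q(c)=\mu$; this is the first assertion.

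For the inequality I would apply Lemma~\ref{lemma:alt-f-minus-mu} to $Q$ (with average $\mu$) and to $P$ (with average $1$), obtaining
\[
\tfrac12\|Q-Q*1\|_1=\int_{\{Q>\mu\}}(Q-\mu)
\qquad\text{and}\qquad
\tfrac12\|P-P*1\|_1=\int_{\{P>1\}}(P-1),
\]
the integrals taken against normalized Lebesgue measure. Because $Q$ is decreasing and continuous, $\{Q>\mu\}$ is an initial segment $[\alpha,c_0)$ with $Q(c_0)=\mu$; since $Q-\mu$ vanishes between $c_0$ and any other admissible root $c$, one may replace $c_0$ by $c$ and write $\tfrac12\|Q-Q*1\|_1=\int_\alpha^{c}(Q-\mu)$. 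Likewise $\{P>1\}=[\alpha,p_0)$ with $P(p_0)=1$, and setting $\Phi(t):=\int_\alpha^t(P-1)$ one has $\Phi(\alpha)=\Phi(\beta)=0$, $\Phi$ increasing on $[\alpha,p_0]$ and decreasing on $[p_0,\beta]$, so $0\le\Phi\le\Phi(p_0)$ and $\tfrac12\|P-P*1\|_1=\Phi(p_0)$.

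The comparison then goes as follows. Writing $Q-\mu=a(P-1)-b\bigl(P(\beta+\alpha-\cdot)-1\bigr)$ and applying the substitution $y=\beta+\alpha-x$ once more yields
\[
\int_\alpha^{c}(Q-\mu)\,dx=a\,\Phi(c)+b\,\Phi(\beta+\alpha-c).
\]
After the reduction to $[0,1]$ the stated hypothesis $\int_\alpha^{\alpha+c}P+\int_{\beta-c}^{\beta}P\ge 2c$ is exactly the inequality $\Phi(c)\ge\Phi(\beta+\alpha-c)$, so the right-hand side is at most $(a+b)\Phi(c)=\Phi(c)\le\Phi(p_0)$, which is precisely $\tfrac12\|P-P*1\|_1$. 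This gives $\|Q-Q*1\|_1\le\|P-P*1\|_1$. Taking $a=1$, $b=0$ (so $c=p_0$) gives equality, which is the point of stating it in this form: it identifies $P$ itself as the worst case among admissible $Q$, and this is how the theorem is used in combination with Lemma~\ref{lemma:negative-conv-bound}.

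I do not expect a serious obstacle here: the content lies in recognizing that $Q$ inherits monotonicity from $P$ (this is where $a,b\ge 0$ is needed) so that Lemma~\ref{lemma:alt-f-minus-mu} collapses each norm to an integral over one interval, and in carrying out the reflection change of variables without sign errors. The only points requiring care are the possible non-uniqueness of $c$ when $Q$ is flat at level $\mu$, and the normalization of the convolution versus the average, both dispatched by the reduction to $[0,1]$. I would also note that the bare inequality admits a one-line proof, from $|Q-\mu|\le a|P-1|+b\,|P(\beta+\alpha-\cdot)-1|$ pointwise together with the fact that reflection preserves the normalized $L^1$ norm; but the half-line computation above is the version that additionally exhibits the extremizer and explains the role of the hypothesis.
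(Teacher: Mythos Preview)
Your argument is correct and follows the same route as the paper: reduce to $[0,1]$, invoke Lemma~\ref{lemma:alt-f-minus-mu} to rewrite each norm as twice an integral over an initial segment determined by monotonicity, and then compare those one-sided integrals. Your bookkeeping via the primitive $\Phi(t)=\int_0^t(P-1)$ and the identity $\int_0^c(Q-\mu)=a\Phi(c)+b\Phi(1-c)$ is a little cleaner than the paper's area functions $A(\nu,f;c)$ and $\widetilde A(\nu,f;c)$, but the substance is identical. Your closing remark is worth highlighting: the pointwise bound $|Q-\mu|\le a|P-1|+b\,|P(\beta+\alpha-\cdot)-1|$ together with reflection-invariance of the normalized $L^1$ norm already yields $\|Q-Q*1\|_1\le\|P-P*1\|_1$ without any use of the integral hypothesis, so that hypothesis is in fact superfluous for the inequality as stated --- an observation the paper does not make, and which would spare the verification of that condition in the proof of Theorem~\ref{thm:szop-h1-r}.
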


\begin{proof}
We will suppose without loss of generality 
that $\alpha = 0$ and 
$\beta = 1$.
Also we may suppose without loss of generality that $a \geq b$, since 
if $Q(x) = a P(x) - b P(1-x)$ and 
$\widehat{Q}(x) = b P(x) - a P(1-x)$, then 
$\widehat{Q}(x) = - Q(1-x)$, so 
$\|Q - Q * 1\|_1 = \|\widehat{Q} - \widehat{Q}*1\|_1$. 
The average of $aP(x) -bP(1-x)$ is 
$a - b = 1 - 2b$.  
Let $c$ be such that 
 $aP(c) - bP(1-c) = 1 - 2b$.  
(Such a $c$ exists by the integral mean 
value theorem). 

Suppose that $f(x)$ is nonnegative between 
$0$ and $c$, and that $\nu$ is nonnegative. 
Let $A(\nu, f; c)$ denote the area below 
$f$ and above $\nu$ and between $0$ and $c$.  
Let $\widetilde{A}(\nu, f; c)$ be $0$ if 
$f(c) \leq \nu$ and be $f(c) - \nu$ if 
$f(c) \geq \nu$.  Then 
$A(\nu, f; c) = \int_0^c \widetilde{A}(\nu, f; x) \, dx$. 

Let $\mu = a - b$. 
Note that between $0$ and $c$, the 
function $Q$ always has a value of at least 
$\mu$, so 
$A(\mu, Q; c) = \int_0^c Q(x) - \mu \, dx$. 
Similarly, we have that 
$A(\mu, P; c) = \int_0^c P(x) - \mu \, dx$. 
Also $P(x) = Q(x) + b P(x) + b P(1-x)$. 
Thus 
$A(\mu, P; c) = A(\mu, Q; c) + b \int_0^c P \, dx + b \int_{1-c}^1 P \, dx$. 

If $P(x) \geq 1$ observe that $P(x) - 1 = P(x) - \mu - 2b$ 
since $\mu = 1- 2b$. 
So in this case 
$\widetilde{A}(1, P; c) = \widetilde{A}(\mu, P; c) - 2b$. 
If however $P(x) \leq 1$ then 
$\widetilde{A}(1 ,P; c) = 0$ and 
$\widetilde{A}(\mu, P; c) \leq 2b$ since 
$1 - \mu = 2b$.  Thus in either case 
$\widetilde{A}(1, P; c) \geq \widetilde{A}(\mu, P; c) - 2b$. 
This implies that 
$A(1, P; c) \geq A(\mu, P; c) - 2bc$.  And thus 
\[
A(1,P;c) \geq A(\mu, Q;c) + b \int_0^c P \, dx + b \int_{1-c}^1 P \, dx - 2bc.
\]
So we will have $A(1,P;c) \geq A(\mu, Q; c)$ if 
\[
\int_0^c P \, dx + \int_{1-c}^1 P \, dx \geq 2c.
\]
In this case, by Lemma 
\ref{lemma:alt-f-minus-mu}, 
\[
\int_0^1 |P(x) - 1| \, dx = 2A(1,P;1) 
\geq 2A(1,P;c) \geq 2A(\mu, Q; c).
\]  
Since $Q(c) = \mu$ and $Q(x) \leq \mu$ for 
$x > c$, we have that 
$2A(\mu, Q; c) = 2A(\mu, Q; 1) = 
 \int_0^1 |Q(x) - \mu|\, dx$. 
\end{proof}

We are now able to prove the following theorem. 
\begin{theorem}\label{thm:szop-h1-r}
Suppose that $0 \leq r < 1$ and $u \in h^1_{\mathbb{R}}$.  Then 
\[
\begin{split}
\frac{1}{2\pi}\int_0^{2\pi} |u(re^{i\theta}) - u(0)| \, d\theta
&\leq 
\inf_{a \in \mathbb{R}} \|u-a\|_{h^1_{\mathbb{R}}} 
\frac{1}{2\pi} \int_0^{2\pi} |P_r(e^{i\theta}) - 1| \, d\theta \\
&= \inf_{a \in \mathbb{R}}\|u-a\|_{h^1_{\mathbb{R}}}\left(2 - \frac{4}{\pi}\arccos(r) \right)
\end{split}
\]
where $P_r$ is a Poisson kernel.
\end{theorem}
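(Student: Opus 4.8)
The plan is to peel off two easy reductions and then turn the problem into a statement about convolutions that is handled by Lemma~\ref{lemma:negative-conv-bound} and Theorem~\ref{thm:original-greater-difference}. First, since $u(re^{i\theta})-u(0)=(u-a)(re^{i\theta})-(u-a)(0)$ for every $a\in\mathbb R$, it suffices to prove
\[
\frac1{2\pi}\int_0^{2\pi}|u(re^{i\theta})-u(0)|\,d\theta\le \|u\|_{h^1_{\mathbb R}}\Bigl(2-\tfrac4\pi\arccos r\Bigr)
\]
for every $u\in h^1_{\mathbb R}$; applying this with $u$ replaced by $u-a$ and taking the infimum over $a$ then gives the asserted bound, and the claimed equality follows once the boundary integral has been evaluated (see below). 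Second, replacing $u$ by its dilate $u_s(z)=u(sz)$ and letting $s\to1^-$ reduces to the case that $u$ is continuous on $\overline{\mathbb D}$: indeed $u_s(0)=u(0)$, $\|u_s\|_{h^1_{\mathbb R}}=M_1(s,u)\le\|u\|_{h^1_{\mathbb R}}$, and $u(sre^{i\theta})\to u(re^{i\theta})$ uniformly in $\theta$ since $u$ is uniformly continuous on $\{|z|\le r\}$. For such a $u$ we have $u(re^{i\theta})=\tfrac1{2\pi}\int_0^{2\pi}P_r(\theta-t)u(e^{it})\,dt$; writing $g(t)=u(e^{it})$, $K=\|g\|_1=\|u\|_{h^1_{\mathbb R}}$ (the case $K=0$ being trivial), $f=g/K$, and identifying $\mathbb T$ with $[0,1]$ under normalized Lebesgue measure, we get $u(re^{i\theta})-u(0)=K\bigl(P_r*f-P_r*f*1\bigr)$, so the quantity to be bounded is $K\,\|P_r*f-P_r*f*1\|_1$.

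Next I would apply Lemma~\ref{lemma:negative-conv-bound} with $P=P_r$, obtaining $\|P_r*f-P_r*f*1\|_1\le\|Q-Q*1\|_1$ for some $Q(x)=aP_r^*(x)-bP_r^*(1-x)$ with $a+b=1$ and $a,b\ge0$ (the nonnegativity being visible from the construction in that proof, where $a=\sum_{d_i>0}d_i$, $b=-\sum_{d_i<0}d_i$). Here the decreasing rearrangement of the Poisson kernel is simply $P_r^*(t)=P_r(\pi t)$ for $t\in[0,1]$, because $\{|\theta|\le\phi\}$ is exactly the super-level set $\{P_r\ge P_r(\phi)\}$ and has normalized measure $\phi/\pi$; thus $P_r^*$ is continuous, nonnegative, decreasing, with average $1$. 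Now I invoke Theorem~\ref{thm:original-greater-difference} on $[0,1]$ with $P_r^*$ in the role of $P$: there is a $c\in(0,1)$ with $aP_r^*(c)-bP_r^*(1-c)=a-b$, and provided $\int_0^cP_r^*+\int_{1-c}^1P_r^*\ge2c$ for such a $c$, we get $\|P_r^*-P_r^**1\|_1\ge\|Q-Q*1\|_1$. Since rearrangement preserves $\int|h-\bar h|$ we have $\|P_r^*-P_r^**1\|_1=\|P_r-P_r*1\|_1$, so chaining the inequalities gives $K\,\|P_r*f-P_r*f*1\|_1\le K\,\|P_r-P_r*1\|_1$, as wanted.

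The crux is verifying the integral condition $\int_0^cP_r^*(x)\,dx+\int_{1-c}^1P_r^*(x)\,dx\ge2c$. In the reduction to the case $a\ge b$ used inside Theorem~\ref{thm:original-greater-difference}, any admissible $c$ satisfies $c\le\tfrac12$: from $aP_r^*(c)-bP_r^*(1-c)=a-b$ and monotonicity of $P_r^*$ one reads off $P_r^*(1-c)\le1\le P_r^*(c)$, hence $c\le1-c$. Set $\Phi(c)=\int_0^cP_r^*+\int_{1-c}^1P_r^*-2c$; then $\Phi(0)=0$ and $\Phi(\tfrac12)=\int_0^1P_r^*-1=0$, while $\Phi'(c)=P_r^*(c)+P_r^*(1-c)-2$. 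Using $P_r^*(t)=P_r(\pi t)$ and $\cos(\pi-\pi c)=-\cos(\pi c)$, a short computation yields
\[
P_r^*(c)+P_r^*(1-c)=\frac{2(1-r^2)(1+r^2)}{(1+r^2)^2-4r^2\cos^2(\pi c)},
\]
which is strictly decreasing in $c$ on $[0,\tfrac12]$. Hence $\Phi'$ is decreasing, $\Phi$ is concave on $[0,\tfrac12]$, and since $\Phi$ vanishes at both endpoints, $\Phi\ge0$ on $[0,\tfrac12]$; in particular the condition holds at the relevant $c$. This is the step I expect to be the main obstacle; the rest is bookkeeping about normalizations.

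Finally, to evaluate $\|P_r-P_r*1\|_1=\tfrac1{2\pi}\int_0^{2\pi}|P_r(e^{i\theta})-1|\,d\theta$ (which also proves the equality claimed in the preceding corollary): since $P_r*1=1$ and $P_r(\theta)\ge1\iff\cos\theta\ge r$, Lemma~\ref{lemma:alt-f-minus-mu} gives
\[
\tfrac1{2\pi}\int_0^{2\pi}|P_r-1|\,d\theta=\tfrac1{\pi}\int_{|\theta|\le\arccos r}\!\!\bigl(P_r(\theta)-1\bigr)\,d\theta=\tfrac2\pi\int_0^{\arccos r}\!\!P_r(\theta)\,d\theta-\tfrac2\pi\arccos r.
\]
Using the elementary antiderivative $\int_0^{t_0}P_r(\theta)\,d\theta=2\arctan\!\bigl(\tfrac{1+r}{1-r}\tan\tfrac{t_0}{2}\bigr)$ together with the identity $\tfrac{1+r}{1-r}\tan\tfrac{t_0}{2}=\cot\tfrac{t_0}{2}$ when $\cos t_0=r$, so that $\arctan(\cot\tfrac{t_0}{2})=\tfrac\pi2-\tfrac{t_0}{2}$, one finds $\tfrac2\pi\int_0^{\arccos r}P_r=2-\tfrac2\pi\arccos r$, and therefore $\tfrac1{2\pi}\int_0^{2\pi}|P_r-1|\,d\theta=2-\tfrac4\pi\arccos r$, which completes the proof.
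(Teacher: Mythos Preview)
Your argument follows the same architecture as the paper—reduce to $a=0$, dilate so that $u$ has boundary values, write $u(re^{i\theta})-u(0)=K(P_r*f-P_r*f*1)$, apply Lemma~\ref{lemma:negative-conv-bound} and then Theorem~\ref{thm:original-greater-difference}, and finish by evaluating $\|P_r-1\|_1$—but the two technical verifications are done differently. For the hypothesis of Theorem~\ref{thm:original-greater-difference} you use a concavity argument: $\Phi(c)=\int_0^cP_r^*+\int_{1-c}^1P_r^*-2c$ vanishes at $0$ and $1/2$, and $\Phi'(c)=P_r^*(c)+P_r^*(1-c)-2=\tfrac{2(1-r^4)}{(1+r^2)^2-4r^2\cos^2(\pi c)}-2$ is decreasing on $[0,\tfrac12]$, hence $\Phi\ge0$ there. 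The paper instead works on $[0,\pi]$, interprets $\int_0^cP$ and $\int_{\pi-c}^{\pi}P$ as harmonic measures $\alpha/\pi-c/2\pi$ and $\beta/\pi-c/2\pi$, and reduces the inequality to $\arg\!\bigl((e^{2ic}-r^2)/(1-r^2)\bigr)\ge 2c$, read off geometrically. Similarly, for $\tfrac1{2\pi}\int|P_r-1|$ you use the Weierstrass antiderivative and the identity $\tfrac{1+r}{1-r}\tan\tfrac{t_0}{2}=\cot\tfrac{t_0}{2}$ at $t_0=\arccos r$, whereas the paper again appeals to harmonic measure. Your versions are more elementary and avoid the interval juggling $[0,\pi]\leftrightarrow[0,2\pi]$ in the paper.

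One correction is needed in your localization of $c$. The claim that $P_r^*(1-c)\le 1\le P_r^*(c)$ at the admissible $c$ is false: for $a\ge b$ the root actually lies in $(c_0,\tfrac12]$ with $c_0=\tfrac1\pi\arccos r$, so both values are at most $1$. The easiest fix is to observe directly that $g(c)=aP_r^*(c)-bP_r^*(1-c)$ is strictly decreasing with $g(0)=a\tfrac{1+r}{1-r}-b\tfrac{1-r}{1+r}>a-b$ and $g(\tfrac12)=(a-b)P_r^*(\tfrac12)=(a-b)\tfrac{1-r^2}{1+r^2}\le a-b$, placing the unique root in $(0,\tfrac12]$; your concavity bound then applies. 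With this patch the proof is complete.
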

Note that the theorem holds for $r=1$ if we replace 
$\frac{1}{2\pi} \int_0^{2\pi} |u(e^{i\theta}) - u(0)| \, d\theta$ by 
$\|u - u(0)\|_{h^1_{\mathbb{R}}}$. 
\begin{proof}
First note that it suffices to prove the statement for $a = 0$, since 
$u(re^{i\theta}) - u(0) = \widetilde{u}(re^{i\theta}) - \widetilde{u}(0)$ 
if $\widetilde{u} = u - a$. 

First assume that $u$ is the real part of an $H^1$ function, and let 
$f$ be its boundary value function. 
Let $P(\theta)$ be the 
Poisson kernel $P_r(e^{i \theta})$
restricted to $0 \leq \theta \leq \pi$.  
Let $a, b \geq 0$ and $a + b = 1$. 

Let $c$ be the number between $0$ and $\pi$ such that 
$a P(c) - b P(\pi - c) = a - b$.  
Such a $c$ exists by the integral mean value theorem. 
Note that $c$ is 
unique and a 
continuous function of $a$ by the 
fact that $aP(c) - b P(\pi - c)$ is strictly 
decreasing and the implicit function theorem. 
Note that $P(\pi / 2) = (1-r^2)/(1+r^2) < 1$.
If $a = 1/2$, then $c = \pi/2$ since then 
$a P(\pi/2) - b P(\pi - (\pi/2)) = 0$. 
However, 
for $0 \leq a < 1/2$, the number $c$ is strictly less than
$\pi/2$, because 
\[a P(\pi/2) - b P(\pi - (\pi/2)) = 
(a-b) P(\pi/2) < (a-b).\] 

Let 
\[ 
\alpha = \arg[(e^{ic}-r)/(1-r)] \]
and
\[
\beta = \arg[(-1-r)/(e^{i(\pi-c)}-r)] = \arg[(e^{ic}+r)/(1+r)].
\]
Now $\frac{1}{2\pi}\int_\gamma^\delta P(x) \, dx$ is 
equal to the harmonic measure of the arc of the unit circle
$[e^{i\gamma}, e^{i \delta}]$ at the point $r$, 
which equals $\phi/\pi - (\delta - \gamma)/(2\pi)$, where 
$\phi$ is the angle subtended at $z$ by the arc 
(see \cite[Chapter 1, Exercise 1]{Garnett-Marshall_Harmonic-Measure}). 
Thus
$\frac{1}{2\pi}\int_0^c P(x) \, dx = \alpha/\pi - c/(2\pi)$. 
Also
$\frac{1}{2\pi}\int_{\pi-c}^\pi P(x) \, dx = \beta/\pi - c/(2\pi)$. 
So the sum of the last two integrals is $(\alpha + \beta - c)/\pi$.  
We will show that this is
at least $2c/2\pi$.  To do so we need $\alpha + \beta \geq 2c$. 
But 
\[
\frac{e^{ic}-r}{1-r} 
\frac{e^{ic}+r}{1+r} =
\frac{e^{2ic}-r^2}{1-r^2}.
\]
The argument of the last expression is measure of the angle with 
vertex $r^2$ and endpoints $0$ and $2c$, which is at least $2c$ if 
$r > 0$ and $0 \leq c \leq \pi/2$.  
But we have shown above that $0 \leq c \leq \pi/2$. 
So we always have 
$\int_0^c P(x) \, dx + \int_{\pi - c}^{\pi} P(x) \, dx \geq 2c$. 

Let $\|\cdot\|$ denote the $L^1$ norm with normalized 
Lebesgue measure. 
 Apply Theorem 
\ref{thm:original-greater-difference} to see that 
$\|Q - Q * 1\| \leq \|P  - P * 1\|$ for any $Q$ of the form 
$a P(x) - b P(\pi - x)$ where $a,b \geq 0$ and $a + b = 1$. 
If $\widetilde{P}(x)$ is defined on 
$[0, 2\pi]$ by $\widetilde{P}(x) = P(x/2)$, 
and similarly for $\widetilde{Q}$, then this implies that 
$\|\widetilde{Q}  - \widetilde{Q} * 1\| \leq 
\|\widetilde{P} - \widetilde{P} * 1\|$.

Now let $P^*$ be the decreasing rearrangement of 
$P_r(e^{i\theta})$, thought of as a function of 
$\theta$, where $0 \leq \theta \leq 2\pi$.  Notice that 
Lemma \ref{lemma:negative-conv-bound} shows that 
$\|P_r*f-P_r*f*1\| \leq \|\widehat{Q} - \widehat{Q}*1\|$, where 
$\widehat{Q}$ is some function of the form 
$aP_r^*(x) - bP_r^*(2\pi - x)$
and 
$f$ is any continuous function 
on $[0, 2\pi]$ that has norm $1$. 
But for $0 \leq x \leq 2 \pi$ one has 
$P_r^*(x) = P_r(x/2) = \widetilde{P}(x)$ 
since $P_r$ is symmetric about $0$.  And thus we have 
shown above that $\|\widehat{Q} - \widehat{Q}*1\|$ is at most 
$\|\widetilde{P}  - \widetilde{P} * 1\|$.
But $\widetilde{P}*1 = P_r * 1 = 1$, where 
$P_r$ is considered as a function on $[0,2\pi]$. And also 
$P_r - 1$ is equimeasurable with 
$\widetilde{P}-1$.  And thus 
$\|P_r*f - P_r*f*1\| \leq \|P_r - P_r*1\|$. 

Now notice that $P_r(\theta) = 1$ if 
$\theta = \pm \arccos(r)$.  
Now 
\[\frac{1}{2\pi} \int_{-\arccos(r)}^{\arccos(r)} 
P_r(\theta) \, d\theta 
= \alpha/\pi - 2\arccos(r)/(2\pi),
\] where 
$\alpha$ is the measure of the angle between 
$e^{-i \arccos(r)}$, $r$, and $e^{i \arccos(r)}$.  But the 
measure of the angle is $\pi$.  
Thus the integral is $1-\arccos(r)/\pi$.  
The (normalized) integral of $P_r$ over the complimentary interval is 
thus $\arccos(r)/\pi$.

Thus 
\[
\begin{split}
\frac{1}{2\pi}
\int_{-\pi}^{\pi} |P_r(\theta) - 1| \, d\theta &= 
\left[ \left(1 - \frac{\arccos(r)}{\pi}\right) - \frac{\arccos{r}}{\pi}\right] +\\ 
& \qquad \left[ 
-\frac{\arccos(r)}{\pi} + \left(1 - \frac{\arccos(r)}{\pi}\right)\right] \\ &= 2 - \frac{4}{\pi} \arccos(r) .
\end{split}
\]
This proves the result if $u$ is the real part of an $H^1$ function. 

Now suppose that $u$ is not the real part of an $H^1$ function.  As before, 
let $u_s$ be defined by $u_s(z) = u(sz)$ for $0 < s < 1$.  Then 
$\|u_s\|_{h^1_{\mathbb{R}}} \leq \|u\|_{h^1_{\mathbb{R}}}$ since the $M_1$ integral means increase 
for harmonic functions (see \cite{D_Hp}).  So 
\[
\frac{1}{2\pi}\int_0^{2\pi} |u_s(re^{i\theta}) - u(0)| \, d\theta
\leq \|u\|_{h^1_{\mathbb{R}}} 
\left(2 - \frac{4}{\pi}\arccos(r) \right).
\]
Letting $s \rightarrow 1$ gives the result. 

\end{proof}

\begin{corollary}
The value of $\|\szop\|_{h^1_{\mathbb{R}} \rightarrow a^1_{\mathbb{R}}} = 1 $.
\end{corollary}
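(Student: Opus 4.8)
The plan is to combine the pointwise-in-$r$ estimate of Theorem \ref{thm:szop-h1-r} with the description of the harmonic Bergman norm as a weighted integral of the integral means $M_1(r,\cdot)$, then to check that the resulting constant is exactly $1$ and is attained.

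First I would record that for a harmonic function $g$ on $\mathbb{D}$, writing normalized area measure as $\tfrac{1}{\pi} r\,dr\,d\theta$, one has $\|g\|_{a^1_{\mathbb{R}}} = \int_0^1 M_1(r,g)\,2r\,dr$. Applying this to $g = \szop u = u - u(0)$ and invoking Theorem \ref{thm:szop-h1-r} at each radius $0 \le r < 1$ gives
\[
\|\szop u\|_{a^1_{\mathbb{R}}} = \int_0^1 M_1(r, u - u(0))\,2r\,dr \le \|u\|_{h^1_{\mathbb{R}}}\int_0^1 \left(2 - \frac{4}{\pi}\arccos r\right)2r\,dr.
\]
It then remains to evaluate $I := \int_0^1 \left(2 - \tfrac{4}{\pi}\arccos r\right)2r\,dr$. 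Here $\int_0^1 4r\,dr = 2$, while integration by parts (with $\arccos r$ and $dv = 2r\,dr$) together with the substitution $r = \sin t$ in $\int_0^1 \frac{r^2}{\sqrt{1-r^2}}\,dr = \tfrac{\pi}{4}$ yields $\frac{8}{\pi}\int_0^1 r\arccos r\,dr = \frac{8}{\pi}\cdot\frac{\pi}{8} = 1$. Hence $I = 2 - 1 = 1$, so $\|\szop\|_{h^1_{\mathbb{R}} \to a^1_{\mathbb{R}}} \le 1$.

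For the reverse inequality — in fact, for attainment — I would take $u$ to be the Poisson kernel $P$, that is, the harmonic function $P(re^{i\theta}) = P_r(\theta) = \frac{1-r^2}{1-2r\cos\theta + r^2}$. Then $M_1(r,P) = \frac{1}{2\pi}\int_0^{2\pi}P_r(\theta)\,d\theta = 1$ for every $r$, so $\|P\|_{h^1_{\mathbb{R}}} = 1$, and $P(0) = 1$. Moreover the computation carried out at the end of the proof of Theorem \ref{thm:szop-h1-r} shows that for the Poisson kernel the inequality there is an equality: $M_1(r, P - 1) = \frac{1}{2\pi}\int_{-\pi}^{\pi}|P_r(\theta) - 1|\,d\theta = 2 - \frac{4}{\pi}\arccos r$. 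Integrating against $2r\,dr$ gives $\|\szop P\|_{a^1_{\mathbb{R}}} = I = 1 = \|P\|_{h^1_{\mathbb{R}}}$, so the operator norm equals $1$ and is attained by $P$.

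The argument is essentially immediate once Theorem \ref{thm:szop-h1-r} is in hand; the only points requiring a little care are the evaluation of $I$ and the verification that $P \in h^1_{\mathbb{R}}$ with $\|P\|_{h^1_{\mathbb{R}}} = 1$ (which holds because $P$ is the Poisson integral of a unit point mass, so $M_1(r,P) \equiv 1$). I do not anticipate a genuine obstacle.
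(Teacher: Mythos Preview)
Your proposal is correct and follows essentially the same approach as the paper: integrate the radius-wise bound of Theorem \ref{thm:szop-h1-r} against $2r\,dr$, check the resulting constant is $1$, and verify attainment by the Poisson kernel. Your write-up simply fills in more of the routine computations (the evaluation of $I$ and the identity $M_1(r,P-1)=2-\tfrac{4}{\pi}\arccos r$) than the paper's terse proof does.
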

\begin{proof}
We have that 
\[
\|\szop\|_{h^1_{\mathbb{R}} \rightarrow a^1_{\mathbb{R}}} \leq 
\int_0^1 \|\szop_r\| \, 2r \, dr = 
\int_0^1 \left(2 - \frac{4}{\pi}\arccos(r)\right) \, 2r \, dr 
= 1.
\]
This is attained for the Poisson kernel, though of as the function
$re^{i\theta} \mapsto P_{r}(e^{i\theta})$ defined in the unit disc. 
\end{proof}

\providecommand{\bysame}{\leavevmode\hbox to3em{\hrulefill}\thinspace}
\providecommand{\MR}{\relax\ifhmode\unskip\space\fi MR }
\providecommand{\MRhref}[2]{%
  \href{http://www.ams.org/mathscinet-getitem?mr=#1}{#2}
}
\providecommand{\href}[2]{#2}

\end{document}